\newtheorem{tw}{Theorem}
\newtheorem{lm}{Lemma}
\newtheorem{pr}{Proposition}
\newtheorem{wn}{Corollary}
\theoremstyle{definition}
\newtheorem{df}{Definition}
\newtheorem{uw}{Remark}
\newcommand{\R}{\mathbb{R}}
\newcommand{\N}{\mathbb{N}}
\newcommand{\Z}{\mathbb{Z}}
\newcommand{\cT}{\mathcal{T}}
\newcommand{\cS}{\mathcal{S}}
\newcommand{\cB}{\mathcal{B}}
\newcommand{\cC}{\mathcal{C}}
\newcommand{\raz}{\mathbbm{1}}
\begin{document}
\bibliographystyle{plain}

\title{A note on the isomorphism of Cartesian products of ergodic flows}

\author{Joanna Ku\l{}aga\\{\small Faculty of Mathematics and Computer Science},\\{\small Nicolaus Copernicus University},\\{\small ul. Chopina 12/18, 87-100 Toru\'n, Poland}\\ {\small e-mail: joanna.kulaga@gmail.com}}
\date{}

\maketitle

\makeatletter{\renewcommand*{\@makefnmark}{}
\footnotetext{1999 \emph{Mathematics Subject Classification}: 37A05, 28D05, 37A35, 47A35.}\makeatother}

\abstract{ We show an isomorphism stability property for Cartesian products of either flows with joining primeness property or flows which are $\alpha$-weakly mixing.}

\begin{keywords}
ergodic dynamical systems, weak convergence, $\alpha$-weakly mixing systems, joining primeness property, Cartesian product of flows.
\end{keywords}

\thispagestyle{empty}

\section{Introduction}

The isomorphism problems in ergodic theory cover a broad spectrum of issues, e.g. the question when particular dynamical systems are metrically or spectrally isomorphic or the task of a more general nature to find invariants which distinguish dynamical systems. Our motivation in this note is a question posed by J.-P.~Thouvenot, which lies slightly outside the scope of the above-mentioned problems
\begin{equation}\label{Q:jpt} 
\begin{array}{l} 
\mbox{whether an isomorphism of Cartesian squares of $T$ and $S$}\\
\mbox{respectively, implies an isomorphism of $T$ and $S$.}
\end{array}
\end{equation}

Thouvenot's question still remains open in the class of all automorphisms. The first step toward a solution was made by V.V.~Ryzhikov in~\cite{ryz}. He proved that automorphisms $T$ and $S$ are isomorphic, provided that their Cartesian powers $T^{\times d}$ and $S^{\times d}$ (for some $d\geq 1$) are isomorphic and $T$ is $\alpha$-weakly mixing. This result, taking into account that $\alpha$-weak mixing is generic~\cite{St}, gives therefore the positive answer to~\eqref{Q:jpt} for a typical automorphism. In~\cite{ryz-tro} V.V.~Ryzhikov and A.E.~Troitskaya strengthened the result from~\cite{ryz} by replacing $\alpha$-weak mixing with the existence of a polynomial in the weak closure of time automorphisms:
\begin{tw}[\cite{ryz-tro}]\label{tw1} 
Let $T$ and $S$ be ergodic automorphisms of probability standard Borel spaces. Assume that for some $n_k\to\infty$
\begin{equation}\label{eq:gwiazdka}
T^{n_k}\to a\cdot\Pi+(1-a)\sum_{i\in\Z}a_iT^i
\end{equation}
for $a_i\geq 0$ for $i\in\Z$ and $a\geq 0$ such that $a+\sum_{i\in\Z}a_i=1$ with at least two summands positive.\footnote{The convergence in~\eqref{eq:gwiazdka} is the weak convergence of Markov operators of the $L^2(X,\mathcal{B},\mu)$-space on which the automorphism $T$ acts by $Tf=f\circ T$ and $\Pi f=\Pi_{X,X}f:= \int f\ d\mu$.}

If, for some $d\geq1$, $T^{\times d}$ is isomorphic to $S^{\times d}$ then $T$ is isomorphic to $S$.
\end{tw}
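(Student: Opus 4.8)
The plan is to pass to Koopman operators and to reconstruct the coordinate factors of the two products. Let $T$ act on $(X,\cB,\mu)$ and $S$ on $(Y,\cC,\nu)$, and let $\Phi$ be a measure isomorphism realizing $T^{\times d}\cong S^{\times d}$; denote by $U_\Phi\colon L^2(Y^d)\to L^2(X^d)$ the induced unitary, so that $U_\Phi\,S^{\otimes d}=T^{\otimes d}\,U_\Phi$ under the identifications $L^2(X^d)=L^2(X)^{\otimes d}$, $L^2(Y^d)=L^2(Y)^{\otimes d}$, where $S^{\otimes d}$, $T^{\otimes d}$ are the Koopman operators of the products. The decisive extra feature of $U_\Phi$ is that it is \emph{multiplicative}, being induced by a point map. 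From $T^{n_k}\to P:=a\Pi+(1-a)\sum_i a_iT^i$ (weakly) I get $(T^{\otimes d})^{n_k}=(T^{n_k})^{\otimes d}\to P^{\otimes d}$; extracting a subsequence with $S^{n_k}\to Q$ (the Markov operators are weakly compact) and using that multiplication by the fixed unitary $U_\Phi$ is weakly continuous, I pass to the limit in $U_\Phi (S^{\otimes d})^{n_k}=(T^{\otimes d})^{n_k}U_\Phi$ to obtain
\[
U_\Phi\,Q^{\otimes d}=P^{\otimes d}\,U_\Phi .
\]
A preliminary structural step is to show that $Q$ is again a proper mixture $b\Pi+(1-b)\sum_i b_iS^i$: it is a Markov operator commuting with $S$, and the conjugacy pins down its spectral data from that of $P$.

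Writing $L^2(X)=\mathbb{C}\raz\oplus H_0$ with $H_0=L^2_0(X)$, I use the chaos grading $L^2(X^d)=\bigoplus_{k=0}^d\mathcal H_k$, where $\mathcal H_k=\bigoplus_{|E|=k}H_E$ and $H_E$ consists of the functions depending (in the mean-zero sense) exactly on the coordinates in $E$. Every operator $B^{\otimes d}$ with $B\raz=\raz$ and $BH_0\subseteq H_0$ preserves this grading; in particular $P^{\otimes d}$ does, acting on $H_E$ as $(1-a)^{|E|}R_0^{\otimes E}$ with $R_0=\big(\sum_i a_iT^i\big)\big|_{H_0}$. The target is to show that $U_\Phi$ maps each coordinate subspace $L^2_0(Y_j)\subset\mathcal H_1(Y^d)$ onto some $L^2_0(X_{\sigma(j)})$. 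Granting that $U_\Phi$ transports the grading, multiplicativity closes the argument: for bounded $f,g\in\mathcal H_1$ the $\mathcal H_2$-component of the product $fg$ vanishes precisely when $f$ and $g$ are carried by a single common coordinate, and since $U_\Phi(fg)=U_\Phi f\cdot U_\Phi g$ this relation is preserved; the maximal subspaces of $\mathcal H_1$ on which it holds are exactly the $L^2_0(X_j)$, which produces the permutation $\sigma$. Restricting $U_\Phi$ to one coordinate then yields a multiplicative, integral-preserving, $S$-to-$T$ intertwining bijection $L^2(Y)\to L^2(X)$, i.e.\ an isomorphism $T\cong S$.

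This scheme is transparent in the $\alpha$-weakly mixing case $\sum_i a_iT^i=\mathrm{Id}$ (so $P=a\Pi+(1-a)\mathrm{Id}$). Then $R_0=\mathrm{Id}$ is scalar, $P^{\otimes d}$ is self-adjoint, and $P^{\otimes d}|_{\mathcal H_k}=(1-a)^k\,\mathrm{Id}$; hence the $\mathcal H_k$ are exactly the eigenspaces of $P^{\otimes d}$ for the distinct eigenvalues $(1-a)^k$. Eigenspaces are preserved by the unitary conjugacy $U_\Phi Q^{\otimes d}U_\Phi^{-1}=P^{\otimes d}$ (and the matching eigenvalues force $Q=a\Pi+(1-a)\mathrm{Id}$), so $U_\Phi$ carries the $Y$-grading onto the $X$-grading, in particular $\mathcal H_1(Y^d)$ onto $\mathcal H_1(X^d)$, and the multiplicative separation of coordinates applies verbatim.

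The main obstacle is the general polynomial case, where $R_0$ is not scalar. Now $P^{\otimes d}$ still respects the grading but is no longer a self-adjoint operator with separated spectrum: the spectrum of $P^{\otimes d}|_{\mathcal H_k}$ is of size roughly $((1-a)\|R_0\|)^k$ and these ranges overlap across levels -- indeed for $a=0$ one may even have $\|R_0\|=1$ -- so the grading is \emph{not} recoverable from the spectrum of $P^{\otimes d}$ alone. Identifying $\mathcal H_1$, equivalently the coordinate factorization, in a conjugacy-invariant way is thus the crux. This is exactly where the hypothesis of at least two positive summands enters: it guarantees that $P$ is neither a single power $T^i$ (a unitary, which would act isometrically on each level and betray nothing) nor the projection $\Pi$, i.e.\ that $P$ is a genuine multi-level contraction. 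The plan is to combine the relation $U_\Phi Q^{\otimes d}=P^{\otimes d}U_\Phi$ with the multiplicativity of $U_\Phi$ and with further limits taken inside the product -- exploiting products and powers of $P$ and the compact-semigroup structure of the weak closure $\overline{\{T^{n}\}}$ -- so as to characterize $\mathcal H_{\le 1}$ dynamically; once this is in place, the coordinate separation and the reconstruction of $T\cong S$ proceed as above. I expect this dynamical recovery of the first chaos to be the most delicate point of the proof.
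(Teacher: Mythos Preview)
First, note that Theorem~\ref{tw1} is not proved in this paper: it is quoted from \cite{ryz-tro} as background for the flow results that follow. There is therefore no ``paper's own proof'' to compare against here; the closest thing the paper does prove is Theorem~\ref{tw2}(ii), the $\alpha$-weakly mixing case for flows, via Lemmas~\ref{lm:15}--\ref{lm:20} and Proposition~\ref{pr:gl}.

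That said, your proposal is not a proof of Theorem~\ref{tw1}. You yourself flag the general polynomial case as unresolved (``I expect this dynamical recovery of the first chaos to be the most delicate point of the proof'') and offer only a plan. But the entire content of Theorem~\ref{tw1} beyond Ryzhikov's earlier $\alpha$-weak-mixing result is precisely that general case; what you have written is a re-derivation of the special case together with an acknowledgement that the main step is missing. Concretely, when $R_0$ is not scalar you give no mechanism to recover $\mathcal H_1$ from the conjugacy $U_\Phi Q^{\otimes d}=P^{\otimes d}U_\Phi$, and your remark that the spectral ranges of $P^{\otimes d}$ on the different levels may overlap (even completely when $a=0$) shows exactly why your eigenspace argument does not extend.

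Even in the portions you do treat there are gaps. The ``preliminary structural step'' that $Q$ has the form $b\Pi+(1-b)\sum_i b_i S^i$ is asserted, not proved; unitary equivalence of $Q^{\otimes d}$ with $P^{\otimes d}$ does not by itself place $Q$ in that affine hull. In the $\alpha$-weak-mixing sub-case the claim that matching eigenvalues force $Q=a\Pi+(1-a)\mathrm{Id}$ needs justification (compare the paper's Lemma~\ref{lm:18}, which does the analogous step for flows via the ergodic decomposition of joinings, not via spectra). Finally, the multiplicative coordinate-separation argument is too brief: you should verify that the relation ``the $\mathcal H_2$-component of $fg$ vanishes for all bounded $f,g$ in the subspace'' singles out exactly the coordinate spaces $L^2_0(X_j)$ inside $\mathcal H_1$, and that $U_\Phi$ carries these classes bijectively. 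The paper's route to coordinate separation in the analogous flow situation is Proposition~\ref{pr:15}(iii) together with Remark~\ref{uw:inpart}, which is a joining argument and sidesteps these issues.
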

In~\cite{Tro} A.E.~Troitskaya extended the above result to the case of $\mathbb{Z}^2$-actions.

It was known that the answer to question~\eqref{Q:jpt} is positive for simple systems as the structure of factors of their Cartesian products had been described thoroughly~\cite{dJ-R}. In general however the structure of factors of Cartesian products for a given system may be very complex which explains the difficulties in providing a full answer to Thouvenot's question.

Theorem~\ref{tw1} gives a result for automorphisms. In a letter, V.~V.~Ryzhikov asked whether there is its natural counterpart for flows (i.e. we replace in~\eqref{eq:gwiazdka} the sum $\sum_{i\in\mathbb{Z}}a_i T^i$ with $\int T_t \ dP(t)$ for a Borel probability measure $P$ on $\mathbb{R}$). One of our aims is to give a partial positive answer to this question. In fact, we will deal with a more general problem, see~(\ref{pr1}) below.

It is not hard to see that the assumptions of Theorem~\ref{tw1} force $T$ to be weakly mixing, hence $S$ is also weakly mixing. Here, we will deal with weakly mixing flows. The problem we intend to consider is the following more general form of~\eqref{Q:jpt}. Assume that ${\cal T}_1,\ldots,{\cal T}_d$ are weakly mixing flows.
\begin{equation}\label{pr1} 
\begin{array}{l} 
\mbox{Suppose that ${\cal T}_1\times\ldots\times {\cal T}_d$ is isomorphic}\\ 
\mbox{to a product flow ${\cal S}_1\times\ldots\times{\cal S}_d$.}\\ 
\mbox{Is it true that there is a permutation $\sigma$ of $\{1,\ldots,d\}$ such that}\\
\mbox{${\cal T}_i$ is isomorphic to ${\cal S}_{\sigma(i)}$ for $i=1,\ldots,d$?}
\end{array}
\end{equation}
\begin{uw}
Note that while the answer to (1) remains unclear (and it is still plausible that it is positive), in general, the answer to (3) is negative. Indeed, assume that $\cT_1,\cT_2,,\cT'_1,\cT'_2$ are Bernoulli flows such  that
$$
h(\cT_1)+h(\cT_2)=h(\cT'_1)+h(\cT'_2)$$
but $h(\cT_1)\neq h(\cT'_1)\neq h(\cT_2)$.  In view of Ornstein's theorem~\cite{MR981173} $\cT_1\times \cT_2$ is isomorphic
 to $\cT_1'\times \cT_2'$, while $\cT'_1$ is neither isomorphic to $\cT_1$ nor to $\cT_2$.  Another example can be found in the class of Gaussian systems with simple spectrum. Indeed, if $\sigma,\sigma_1,\sigma_2,\sigma'_1,\sigma'_2$ are finite positive Borel measures on $\mathbb{R}$ such that
\begin{align*}
&\sigma_1\perp\sigma_2,\ \sigma_1'\perp\sigma'_2,\\
&\sigma=\sigma_1+\sigma_2=\sigma_1'+\sigma_2',\\
 &\sigma'_1\neq\sigma_1\neq\sigma'_2
\end{align*}
and the Gaussian flow $\cT_\sigma$ determined by $\sigma$ has simple spectrum then by the theory of Gaussian systems with ergodic self-joinings Gaussian \cite{joi2} we have
$$\cT_\sigma \simeq \cT_{\sigma_1}\times \cT_{\sigma_2}\simeq \cT_{\sigma'_1}\times \cT_{\sigma'_2}$$
 while $\cT_{\sigma_1}$ is neither isomorphic to $\cT_{\sigma'_1}$ nor to $\cT_{\sigma'_2}$.
\end{uw}

The main result of this note is the following theorem (see Section~\ref{se:deto} for needed definitions).
\begin{tw}\label{tw2}
The answer for~\eqref{pr1} is positive in the following cases:
\begin{itemize}
\item[(i)]
when $\cT_1,\dots,\cT_d$ are weakly mixing and satisfy the $JP$ property,
\item[(ii)]
when $\cT_i$ are  $\alpha_i$-weakly mixing for $\alpha_i\in (0,1)$ for $1\leq i \leq d$.
\end{itemize}
\end{tw}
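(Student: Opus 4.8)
The plan is to use the given isomorphism $\Phi$ between $\cT_1\times\dots\times\cT_d$ and $\cS_1\times\dots\times\cS_d$ to transport the coordinate structure from one side to the other, and to exploit the mixing hypotheses to show that no single factor can be spread across several coordinates of the other product. Write $\cB_i$ for the sub-$\sigma$-algebra of the product generated by the $i$-th coordinate $\cT_i$, and $\cC_j$ for the one generated by $\cS_j$; the coordinates being weakly mixing, these families are mutually independent and together generate the whole $\sigma$-algebra on each side. Through $\Phi$ every $\cT_i$ becomes a factor of $\cS_1\times\dots\times\cS_d$, and every $\cS_j$ a factor of $\cT_1\times\dots\times\cT_d$. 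The goal is to prove that the factor $\cT_i$ is in fact subordinate to a single $\cS_{\sigma(i)}$, and symmetrically, so that $\sigma$ is forced to be a bijection with $\cT_i\simeq\cS_{\sigma(i)}$.

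For case (i) the mechanism is the $JP$ property. Read as a statement about disjointness, $JP$ asserts that a weakly mixing flow with $JP$ which is disjoint from two weakly mixing flows $\mathcal R_1,\mathcal R_2$ is disjoint from $\mathcal R_1\times\mathcal R_2$; equivalently, if such a flow is not disjoint from a weakly mixing product then it is not disjoint from one of the factors. Since $\cT_i$ is a factor of $\cS_1\times\dots\times\cS_d$ it is not disjoint from that product, so applying $JP$ and splitting off one coordinate at a time, I obtain that $\cT_i$ is not disjoint from some single $\cS_j$. I would then upgrade non-disjointness to an actual identification: using weak mixing together with the fact that $\Phi$ furnishes a graph (hence extremely rigid) joining, the common structure between $\cT_i$ and $\cS_j$ is forced to be all of $\cT_i$, so that $\Phi$ carries $\cB_i$ into $\cC_j$. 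Running the symmetric argument with the roles of the two products exchanged pins down a single index and yields the permutation.

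For case (ii) I would argue directly with weak limits of the flows. $\alpha_i$-weak mixing furnishes $t_k\to\infty$ along which $(\cT_i)_{t_k}\to\alpha_i\Pi+(1-\alpha_i)Q_i$ for a Markov operator $Q_i$; passing to a common subsequence I may assume the same $t_k$ works in every coordinate. On the product side the limit of $(\cT_1\times\dots\times\cT_d)_{t_k}$ is then the tensor product $\bigotimes_{i}\bigl(\alpha_i\Pi+(1-\alpha_i)Q_i\bigr)$, whose expansion $\sum_{A\subseteq\{1,\dots,d\}}\prod_{i\in A}\alpha_i\prod_{i\notin A}(1-\alpha_i)\,\bigl(\bigotimes_{i\in A}\Pi\otimes\bigotimes_{i\notin A}Q_i\bigr)$ mixes a full projection $\Pi$ into exactly the coordinates in $A$. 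Conjugating this limit by $\Phi$ and comparing with the analogous expansion obtained from the $\cS_j$ side, the distinct scalar weights attached to the ``$\Pi$ in coordinate $i$'' versus ``$Q_i$ in coordinate $i$'' components let me isolate, for each $i$, the coordinate of the $\cS$-product into which the projection onto $\cB_i$ is sent; this matching is the permutation $\sigma$, and it simultaneously identifies the limit operators, giving $\cT_i\simeq\cS_{\sigma(i)}$.

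I expect the main obstacle in both cases to be the upgrade from a coordinatewise \emph{localization} of a factor to an honest \emph{isomorphism} respecting the product structure, that is, showing that once $\cT_i$ is tied to the single coordinate $\cS_{\sigma(i)}$ the relative independence of the remaining coordinates forces $\Phi$ to restrict to an isomorphism $\cT_i\simeq\cS_{\sigma(i)}$ rather than merely a common factor; ensuring that $\sigma$ comes out injective, so that no two $\cT_i$ collapse into the same $\cS_j$, requires comparing the two directions carefully. In case (ii) additional care is needed when several $\alpha_i$ coincide, since then the scalar weights no longer separate the coordinates and one must instead track the ranges of the operators $Q_i$.
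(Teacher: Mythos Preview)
For part~(i) you have weakened the JP property too much. JP is not merely the disjointness implication you state; its content is that every \emph{ergodic} joining $\lambda\in J^e(\cT,\cS_1\times\cS_2)$ already satisfies $\lambda=\lambda_{X,Y_1}\otimes\nu_2$ or $\lambda=\lambda_{X,Y_2}\otimes\nu_1$. The isomorphism $\Phi$, restricted to the coordinate $\cB_i$, furnishes precisely such an ergodic joining between $\cT_i$ and $\cS_1\times\dots\times\cS_d$, so JP applied iteratively gives directly $\Phi(L^2(X_i))\subset L^2(Y_j)$ for a single $j$. No ``upgrade from non-disjointness'' is needed, and in fact your upgrade step is a genuine gap: from the bare statement that $\cT_i$ and some $\cS_j$ are non-disjoint you cannot deduce anything about where the \emph{particular} map $\Phi$ sends $\cB_i$. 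Once one has $\Phi(L^2(X_i))\subset L^2(Y_{\sigma(i)})$ for every $i$, the fact that the $\cB_i$ together generate the full $\sigma$-algebra forces $\sigma$ to be a bijection and each inclusion to be an equality; that is the whole argument.

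For part~(ii) the sketch misses the two substantive steps. First, one must show that each $\cS_j$ is itself $\beta_j$-weakly mixing along the same sequence $t_n$: a priori the limit of $S_j^{t_n}$ could have a component singular to both $\Delta_j$ and $\nu_j\otimes\nu_j$, and ruling this out requires conjugating the ergodic decomposition of the limit of $(\cT_1\times\dots\times\cT_d)^{t_n}$ through $\Phi$ and comparing it with the tensor decomposition on the $\cS$-side, using uniqueness of ergodic decomposition and a count of ergodic components. Second, even after one knows the multisets $\{\alpha_i\}$ and $\{\beta_j\}$ agree, localizing $\Phi(L^2_0(X_i))$ into a single $L^2_0(Y_j)$ when several $\alpha_i$ coincide is exactly the delicate point: the paper handles it via a JP-type lemma showing that any ergodic joining of a $(1-\alpha)$-weakly mixing flow with a product of $(1-\alpha)$-partially mixing flows projects to a single coordinate. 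Your proposal to ``track the ranges of the operators $Q_i$'' cannot work here, since by the very definition of $\alpha$-weak mixing one has $Q_i=Id$ for every $i$, so the $Q_i$ carry no distinguishing information.
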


Our main tool will be the theory of joinings. Apart from the JP property we will also study some joining properties of similar flavour for $\alpha$-weakly mixing flows. The generalization of part $(i)$ of Theorem~\ref{tw2} to the actions of other abelian Polish groups is straightforward, as the notion of JP is independent of the acting group.

As M.~Lema\'{n}czyk and V.V.~Ryzhikov noticed in private communication, the methods used in this note are not sufficient to answer question~\eqref{pr1} when we consider flows such that 
\begin{equation}\label{3jak2}
T^{t_n} \to a\cdot \Pi+ (1-a)\cdot \int T^t dP(t)
\end{equation}
for some $t_n\to\infty$ and $a\in [0,1)$ without any further assumption on measure $P$. Therefore the question whether Theorem~\ref{tw1} has a full counterpart for flows remains open. It also seems to be an open problem whether~\eqref{eq:gwiazdka} (or~\eqref{3jak2} in case of flows) implies the JP property or a weaker property in the spirit of Proposition~\ref{pr:15}. If $a_i$'s in~\eqref{eq:gwiazdka} decrease to zero exponentially fast then we obtain an analytic function in the weak closure of time automorphisms, which yields the CS property, hence the JP property (see~\cite{lpr}). The same mechanism works for flows and hence we have the following corollary of Theorem~\ref{tw2} (i).
\begin{wn}
The answer for~\eqref{pr1} is positive whenever~\eqref{3jak2} holds and $P$ is non-Dirac with Fourier transform $\hat{P}$ analytic. In particular, the answer for~\eqref{pr1} is positive if $P$ is continuous and has a bounded support.
\end{wn}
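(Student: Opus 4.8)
The plan is to derive the $JP$ property for each flow $\cT_i$ from the hypotheses and then to quote Theorem~\ref{tw2}~(i). Since~\eqref{pr1} is posed for weakly mixing flows, weak mixing of the $\cT_i$ (and hence of the $\cS_j$) is available from the outset; I would only remark in passing that, when $P$ is continuous, weak mixing is in any case forced by~\eqref{3jak2}, because an eigenvalue $\lambda\neq 0$ with unit eigenfunction $f$ would give $e^{i\lambda t_n}=\langle T^{t_n}f,f\rangle\to (1-a)\hat P(\lambda)$, a number of modulus one, whereas $|\hat P(\lambda)|<1$ for every $\lambda\neq 0$ when $P$ is non-atomic. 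Thus the whole problem reduces to checking that~\eqref{3jak2} with $P$ non-Dirac and $\hat P$ analytic implies that each $\cT_i$ has the $JP$ property.

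The core step is to produce a nonconstant analytic function of the flow inside the weak operator closure of $\{T^{t_n}\}$. Restricting~\eqref{3jak2} to the space of zero-mean functions annihilates the term $a\cdot\Pi$, and the spectral theorem for the one-parameter unitary group $(T^t)$ with self-adjoint generator $A$ identifies the Markov operator $\int T^t\,dP(t)$ with $\hat P(A)$, where the functional calculus is applied to $\hat P(x)=\int e^{itx}\,dP(t)$. By hypothesis $\hat P$ is analytic, and it is nonconstant exactly because $P$ is non-Dirac (a constant Fourier transform forces $P=\delta_0$). Hence $(1-a)\hat P(A)$ is a genuine nonconstant analytic function of the generator lying in the weak closure of the time-$t$ maps of $\cT_i$ --- the continuous-time counterpart of the annulus-analytic Laurent series $\sum_i a_iz^i$ that one obtains, in the automorphism case, when the coefficients $a_i$ in~\eqref{eq:gwiazdka} decay exponentially.

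With this in hand I would run the same mechanism as in the discrete case: the presence of a nonconstant analytic function of the flow in the weak closure yields the $CS$ property, and the $CS$ property implies $JP$ (\cite{lpr}). Applying this to each of $\cT_1,\dots,\cT_d$ and invoking Theorem~\ref{tw2}~(i) produces the permutation $\sigma$ with $\cT_i\simeq\cS_{\sigma(i)}$. For the final sentence, if $P$ is continuous with bounded support then it is automatically non-atomic, hence non-Dirac, while compact support makes $\hat P$ extend to an entire function of exponential type by the Paley--Wiener theorem; in particular $\hat P$ is analytic, so the first assertion applies verbatim.

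I expect the real obstacle to lie in the third step, namely in certifying that the implication ``analytic function in the weak closure $\Rightarrow CS$'' of~\cite{lpr}, stated there for Laurent series in a single automorphism, carries over word for word to analytic functions $\hat P(A)$ of the generator of a flow. The spectral identification $\int T^t\,dP(t)=\hat P(A)$ is routine, and the reduction to $JP$ via Theorem~\ref{tw2}~(i) is immediate once $JP$ is known; the delicate matter is that the analyticity of $\hat P$ on the real line must be precisely the regularity consumed by the $CS\Rightarrow JP$ argument, which also explains why the non-Dirac hypothesis --- i.e.\ the nonconstancy of $\hat P$ --- cannot be dropped.
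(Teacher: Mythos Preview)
Your approach is correct and coincides with the paper's: the corollary is stated there without a separate proof, merely as a consequence of the sentence preceding it, namely that~\eqref{3jak2} with $\hat P$ analytic and $P$ non-Dirac yields the CS property (hence JP by Proposition~\ref{pr:my}), after which Theorem~\ref{tw2}~(i) applies. Your worry in the final paragraph is unfounded: the implication ``analytic $\hat P$ in the weak closure $\Rightarrow$ CS'' is not merely an automorphism statement to be transported, but is already recorded for flows as Proposition~\ref{pr:wy} (together with the remark following it that allows the extra $a\cdot\Pi$ term), so no adaptation is needed.
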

This gives a positive partial answer to the original question by V.~V. Ryzhikov.

In fact in the process of proving Theorem~\ref{tw2} we show more: the obtained isomorphisms between $\mathcal{T}_i$'s and $\mathcal{S}_i$'s are restrictions of the original isomorphism between the Cartesian products. In particular, this implies that the centralizer of the product $\mathcal{T}_1\times \dots \times \mathcal{T}_d$ is the product of the centralizers of $\mathcal{T}_1, \dots , \mathcal{T}_d$ up to a permutation of the coordinates, see Corollary~\ref{co:ce1} and Corollary~\ref{co:ce2}.

Similar problems to what we consider here were taken up in~\cite{dJ-L}. It was shown that for a typical automorphism and any $k_1,\dots, k_d, k'_1,\dots, k'_d\in \mathbb{N}$ the convolutions $\sigma_{T^{k_1}}\ast \ldots \ast \sigma_{T^{k_d}}$ and $\sigma_{T^{k'_1}}\ast \ldots \ast \sigma_{T^{k'_d}}$ are mutually singular provided that $(k_1,\dots, k_d)$ is not a rearrangement of $(k'_1,\dots, k'_d)$. This property (which can be viewed as a variation of the CS property) has the following consequence: for a typical automorphism $T$, the only way that $T^l$ (for any $l\in \mathbb{Z}\setminus \{0\}$) can sit as a factor of $T^{k_1}\times \dots \times T^{k_n}\times \dots$ is inside the $i$-th coordinate $\sigma$-algebra for some $i$ with $k_i=l$. In particular (for a generic transformation) $C(T^{k_1}\times \dots \times T^{k_d})=\bigcup_{\pi}C(T^{k_{\pi(1)}})\times\dots\times C(T^{k_{\pi(d)}})$ where $\pi$ runs over the set of permutations of $\{1,\dots,d\}$ such that $\pi(i)=j$ implies $k_i=k_j$.
\section{Definitions and tools}\label{se:deto}
\subsection{Joinings}
Let us recall now the necessary information about joinings.\footnote{For more information on the theory of joinings we refer the reader e.g. to~\cite{joi0},~\cite{joi2} or~\cite{joi1}.} Let $\mathcal{T}=(T^t)_{t\in\mathbb{R}}$ and $\mathcal{S}=(S^t)_{t\in\mathbb{R}}$ be measurable flows on $(X,\mathcal{B},\mu)$ and $(Y,\mathcal{C},\nu)$ respectively (by measurability of the flow $(T^t)_{t\in\mathbb{R}}$ we mean that the map $\mathbb{R} \ni t \mapsto \left\langle f\circ T^t,g \right\rangle\in\mathbb{C}$ is continuous for all $f,g \in L^2 (X,\mathcal{B},\mu)$). By $J(\mathcal{T},\mathcal{S})$ we denote the set of all joinings between $\mathcal{T}$ and $\mathcal{S}$, i.e. the set of all $(T^t\times S^t)_{t\in\mathbb{R}}$-invariant probability measures on $(X\times Y,\mathcal{B}\otimes \mathcal{C})$, whose projections on $X$ and $Y$ are equal to $\mu$ and $\nu$ respectively. For $J(\mathcal{T},\mathcal{T})$ we write $J(\mathcal{T})$ and we denote the subspace of ergodic joinings by adding the superscript $e$: $J^e(\cT,\cS)$. Joinings are in one-to-one correspondence with Markov operators $\Phi\colon L^2(X,\mathcal{B},\mu)\to L^2(Y,\mathcal{C},\nu)$ satisfying $\Phi\circ T^t=S^t \circ \Phi$ for all $t\in\mathbb{R}$: 
\begin{align*}
&\Phi \mapsto \lambda_{\Phi} \in J(\cS,\cT),\ \lambda_{\Phi}(A\times B)= \int_B \Phi(\raz_A)\ d\nu,\\
&\lambda \mapsto \Phi_{\lambda}, \int \Phi_{\lambda}(f)(y)g(y)\ d\nu(y)=\int f(x)g(y)\ d\lambda(x,y).
\end{align*}
We denote by $\Pi_{X,Y}$ the Markov operator corresponding to the product measure $\mu\otimes \nu$. We denote the set of intertwining Markov operators also by $J(\mathcal{T},\mathcal{S})$. This identification allows us to view $J(\mathcal{T})$ endowed with the weak operator topology as a metrisable compact semitopological semigroup. 

It is said that $\mathcal{T}$ and $\mathcal{S}$ are \emph{disjoint}~\cite{Furstenberg67} if $J(\mathcal{T},\mathcal{S})=\{\mu\otimes\nu\}$; we write $\cT\perp \cS$. Given a flow $\mathcal{T}=(T^t)_{t\in\mathbb{R}}$ and a Borel probability measure $P$ on $\mathbb{R}$, we define the Markov operator $\int_{\mathbb{R}}T^t dP(t)$ acting on $L^2(X,\mathcal{B},\mu)$ by $\left\langle(\int_{\mathbb{R}}T^t dP(t))f,g\right\rangle=\int_{\mathbb{R}}\left\langle T^t f,g \right\rangle dP(t)$ for all $f,g\in L^2 (X,\mathcal{B},\mu)$.
\subsection{JP property}
We recall the notion of the joining primeness property (JP).
\begin{df}[\cite{lpr}]
An ergodic flow $\mathcal{T}$ is said to have the \begin{em}joining primeness property\end{em}\footnote{The property~\eqref{eq:toto} defining the JP notion has been observed earlier for the class of quasi-simple systems in~\cite{Ry-Th}.} (JP) if for any weakly mixing flows $\cS_1,\cS_2$ for every $\lambda\in J^e(\mathcal{T},\cS_1\times \cS_2)$ we have
\begin{equation}\label{eq:toto}
\lambda=\lambda_{X, Y_1}\otimes \nu_2 \text{ or }\lambda=\lambda_{X,Y_2}\otimes \nu_1,
\end{equation} 
where $\lambda_{X,Y_1}$ and $\lambda_{X,Y_2}$ stand for the projections of $\lambda$ onto the appropriate coordinates.
\end{df}
\begin{uw}\label{uw:uwaga}
In terms of Markov operators, the JP property means that for every $\Phi \in J^e(\mathcal{T},\cS_1\times \cS_2)$
\begin{equation*}
\Phi=p_1\circ \Phi\text{ or }\Phi=p_2\circ \Phi,
\end{equation*}
where $p_1\colon L^2(Y_1\times Y_2)\to L^2(Y_1)\otimes \raz$, $p_2\colon L^2(Y_1\times Y_2)\to \raz\otimes L^2(Y_2)$ are the orthogonal projections.
\end{uw}
Let us now recall some properties of the class of flows enjoying the JP property.
\begin{pr}[\cite{lpr}]\label{pr:4}
The class of weakly mixing JP flows is closed under distal extensions which are weakly mixing.
\end{pr}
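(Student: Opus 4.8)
The plan is to reduce the statement to a single \emph{lifting lemma}: independence from a weakly mixing flow passes through a distal extension. Let $\hat\cT$ be a weakly mixing distal extension of a weakly mixing JP flow $\cT$, with factor map $\pi\colon \hat X\to X$, and let $\cS_1,\cS_2$ be weakly mixing flows on $(Y_1,\nu_1)$, $(Y_2,\nu_2)$. Fix an ergodic joining $\hat\lambda\in J^e(\hat\cT,\cS_1\times\cS_2)$. Its image $\lambda$ under $\pi\times\mathrm{id}\colon \hat X\times Y_1\times Y_2\to X\times Y_1\times Y_2$ is again ergodic, so $\lambda\in J^e(\cT,\cS_1\times\cS_2)$ and the JP property of $\cT$ applies: after possibly interchanging $\cS_1$ and $\cS_2$ we may assume $\lambda=\lambda_{X,Y_1}\otimes\nu_2$, i.e. the base factor $\cT$ is independent of $\cS_2$. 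It then suffices to prove that this independence lifts, namely that $\hat\lambda=\hat\lambda_{\hat X,Y_1}\otimes\nu_2$; this is exactly the form \eqref{eq:toto} needed to conclude that $\hat\cT$ enjoys the JP property, and since $\hat\lambda$ was an arbitrary ergodic joining we would be done.

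To organize the lift I would view the flow on $\hat X\times Y_1$ (equipped with $\hat\lambda_{\hat X,Y_1}$) as a distal extension of the flow on $X\times Y_1$ (equipped with $\lambda_{X,Y_1}$): distal extensions are stable under taking a product with the identity, so the tower of isometric extensions realizing $\hat X\to X$ tensors coordinatewise with $Y_1$ to realize $\hat X\times Y_1\to X\times Y_1$. With this packaging $\hat\lambda$ becomes a joining of the distal extension $\hat X\times Y_1$ with the weakly mixing flow $\cS_2$, whose projection to the base $X\times Y_1$ is the product with $\nu_2$. The desired equality is then precisely the \emph{lifting lemma}: if $\hat Z\to Z$ is a distal extension, $\cS$ is weakly mixing, and a joining of $\hat Z$ with $\cS$ restricts to the product measure on $Z\times Y_{\cS}$, then it is the product measure on $\hat Z\times Y_{\cS}$.

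I would prove the lifting lemma by transfinite induction along the tower $Z=Z_0\leftarrow Z_1\leftarrow\dots\leftarrow Z_\eta=\hat Z$ of isometric extensions, tracking the associated intertwining Markov operator $\Phi\colon L^2(\hat Z)\to L^2(Y_{\cS})$ in the spirit of Remark~\ref{uw:uwaga}; the hypothesis says that $\Phi$ annihilates the mean-zero functions measurable with respect to $Z_0$, and the goal is annihilation of the mean-zero functions measurable with respect to $Z_\eta$. At a successor step the new functions in $L^2(Z_{\xi+1})\ominus L^2(Z_\xi)$ are generated by functions $f$ that are almost periodic relative to $Z_\xi$. The mechanism I would exploit is that an intertwining Markov operator should send such relatively almost periodic functions to almost periodic vectors of the target flow $\cS$; since $\cS$ is weakly mixing it admits no nonconstant almost periodic vectors, and because $\Phi$ preserves integrals the mean-zero function $f$ is then forced to satisfy $\Phi f=0$. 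At a limit ordinal one uses that $L^2(Z_\xi)$ is the closed span of $\bigcup_{\zeta<\xi}L^2(Z_\zeta)$, so annihilation at each lower level extends by continuity. This gives annihilation at level $\eta$ and hence the lemma.

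The main obstacle is the successor step, specifically the passage from \emph{relative} almost periodicity over $Z_\xi$ to genuine almost periodicity of $\Phi f$ in the weakly mixing target. The relative compactness of $\{T^tf\}_t$ holds only modulo $L^2(Z_\xi)$, so one cannot immediately conclude that $\{S^t\Phi f\}_t=\{\Phi(T^tf)\}_t$ is relatively compact in $L^2(Y_{\cS})$. Here the inductive independence $Z_\xi\perp\cS$ must be used to guarantee that the $Z_\xi$-directions contribute nothing under $\Phi$ and that the relative compactness transfers to honest compactness of the orbit of $\Phi f$; making this rigorous requires a careful analysis of the compact-group (isometric) structure of the extension together with the triviality of the Kronecker factor of $\cS$, which is exactly where weak mixing is indispensable.
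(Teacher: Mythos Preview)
The paper does not give its own proof of this proposition: it is stated with the citation \cite{lpr} and no argument is supplied, so there is nothing in the present paper to compare your sketch against.

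That said, your overall strategy is the standard one and is sound. Projecting $\hat\lambda$ down to $X$, using JP for $\cT$, and then lifting the resulting independence from $\cS_2$ back up through the distal tower is exactly how one proves such closure results. Your ``lifting lemma'' is a special case of the classical relative disjointness of distal (relatively compact) and relatively weakly mixing extensions over a common base: here $\hat Z\to Z$ is distal and $Z\times Y_{\cS}\to Z$ (with the product measure) is relatively weakly mixing since $\cS$ is weakly mixing, so any joining over $Z$ is the relatively independent one, which in this product situation collapses to $\hat\mu\otimes\nu$. Invoking that theorem dispatches your successor step in one stroke and avoids the delicate passage from relative to absolute almost periodicity that you flag as the main obstacle.

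If you prefer to argue the successor step directly, the clean way is not to track individual relatively almost periodic functions but to use the module structure: $L^2(Z_{\xi+1})\ominus L^2(Z_\xi)$ decomposes into finitely generated $L^\infty(Z_\xi)$-modules on which the flow acts via a compact-group-valued cocycle, and one checks that the image of such a module under an intertwining Markov operator into a weakly mixing system, once the base $Z_\xi$ is already killed, must land in the Kronecker factor of $\cS$, hence in the constants. Your sketch is pointing at this, but as written it stops short of an argument; citing the Furstenberg--Zimmer relative disjointness theorem (or the treatment in \cite{joi0}) would close the gap.
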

The next result is a spectral criterion for the JP property.
\begin{pr}[\cite{lpr}]\label{pr:my}
All weakly mixing flows whose maximal spectral type is singular with respect to the convolution of any two continuous measures (this is called in~\cite{lpr} the convolution simplicity property, i.e.\ the CS property) enjoy the JP property.
\end{pr}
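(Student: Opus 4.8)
The plan is to fix an ergodic joining $\lambda\in J^e(\cT,\cS_1\times\cS_2)$ and, via the Markov operator formulation of Remark~\ref{uw:uwaga}, work with the intertwining operator $\Phi\colon L^2(X)\to L^2(Y_1\times Y_2)$; the goal is to prove $\Phi=p_1\circ\Phi$ or $\Phi=p_2\circ\Phi$. Decompose the target into the four $(\cS_1\times\cS_2)$-invariant summands $\raz$, $L^2_0(Y_1)\otimes\raz$, $\raz\otimes L^2_0(Y_2)$ and the \emph{mixed} space $K:=L^2_0(Y_1)\otimes L^2_0(Y_2)$. Since the projection onto the first three summands is precisely $p_1+p_2-\Pi_{X,Y_1\times Y_2}$, everything reduces to the single assertion $P_K\Phi=0$, where $P_K$ is the orthogonal projection onto $K$.

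To establish $P_K\Phi=0$ I would use the spectral theorem for the Koopman one-parameter groups together with the CS hypothesis. Because $\Phi$ intertwines the groups, it commutes with their spectral projections, so for every $f$ the spectral measure of $\Phi f$ is absolutely continuous with respect to that of $f$, hence with respect to the maximal spectral type $\sigma_{\cT}$. On the other hand $K$ carries the Koopman representation of $\cS_1\times\cS_2$ which is the tensor product of the reduced representations on $L^2_0(Y_1)$ and $L^2_0(Y_2)$; its maximal spectral type is therefore the convolution $\sigma_1\ast\sigma_2$ of the reduced maximal spectral types, and these are continuous because $\cS_1,\cS_2$ are weakly mixing. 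By the CS property $\sigma_{\cT}\perp\sigma_1\ast\sigma_2$. Since $P_K$ commutes with the spectral measure, the spectral measure of $P_K\Phi f$ is absolutely continuous with respect to both $\sigma_{\Phi f}\ll\sigma_{\cT}$ and $\sigma_1\ast\sigma_2$; being dominated by two mutually singular measures it vanishes, so $P_K\Phi f=0$. This is where the hypothesis is consumed, and verifying the convolution identity for the maximal type of $K$ is the technically delicate point of this step.

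From $P_K\Phi=0$ we obtain the operator identity $\Phi+\Pi_{X,Y_1\times Y_2}=p_1\circ\Phi+p_2\circ\Phi$. The final, and genuinely nontrivial, step is to upgrade this additive splitting into a single coordinate. I would read the identity at the level of joinings, using linearity of the correspondence $\Phi\mapsto\lambda_\Phi$ and the observations $\lambda_{p_1\Phi}=\lambda_{X,Y_1}\otimes\nu_2$ and $\lambda_{p_2\Phi}=\lambda_{X,Y_2}\otimes\nu_1$, to get the equality of probability measures
\[
\lambda+\mu\otimes\nu_1\otimes\nu_2=(\lambda_{X,Y_1}\otimes\nu_2)+(\lambda_{X,Y_2}\otimes\nu_1).
\]
All four measures appearing here are ergodic for $(T^t\times S_1^t\times S_2^t)$: the joining $\lambda$ is ergodic by assumption; $\mu\otimes\nu_1\otimes\nu_2$ is ergodic since a product of weakly mixing flows is weakly mixing; and $\lambda_{X,Y_1}\otimes\nu_2$ is ergodic because $(X\times Y_1,\lambda_{X,Y_1})$ is an ergodic factor of $\lambda$ while $(Y_2,\nu_2)$ is weakly mixing, and the product of an ergodic flow with a weakly mixing one is ergodic (symmetrically for the last summand).

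The main obstacle is precisely this passage from the additive decomposition to one coordinate, and it is resolved by the uniqueness of the ergodic decomposition: the displayed identity exhibits one invariant measure as a sum of two ergodic measures in two different ways, so the two decompositions must coincide as multisets. Hence $\lambda$ equals either $\lambda_{X,Y_1}\otimes\nu_2$ or $\lambda_{X,Y_2}\otimes\nu_1$, which is exactly the JP conclusion. The points requiring care in writing this out are the ergodicity verifications just listed (so that the ergodic decomposition argument applies) and the bookkeeping of the coincidence cases when some of the four measures agree, all of which still yield $\lambda\in\{\lambda_{X,Y_1}\otimes\nu_2,\ \lambda_{X,Y_2}\otimes\nu_1\}$.
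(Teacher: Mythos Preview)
Your argument is correct and follows essentially the same route as the one the paper imports from \cite{lpr} (the proposition is stated here without proof): first a spectral argument---intertwining operators respect spectral measures, the maximal spectral type on $K=L^2_0(Y_1)\otimes L^2_0(Y_2)$ is $\sigma_1\ast\sigma_2$ with $\sigma_1,\sigma_2$ continuous by weak mixing, and CS gives $\sigma_{\cT}\perp\sigma_1\ast\sigma_2$---to obtain $\Phi_\lambda(L^2_0(X))\subset L^2_0(Y_1)\oplus L^2_0(Y_2)$; then the ergodic-decomposition upgrade to the dichotomy. Your second step is exactly a proof of Lemma~\ref{lm:showjp}, which the paper also quotes from \cite{lpr} without proof and later reuses in Proposition~\ref{pr:15}(iii).
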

Recall that the CS property is generic in the class of flows on a fixed probability Borel space~\cite{lpr}. A stronger property than CS is the \emph{simple convolution} property (SC) which, by definition, holds when the Gaussian action determined by the reduced maximal spectral type $\sigma_{\mathcal{T}}$ of the given system has simple spectrum. In a recent paper~\cite{lemanczyk+parreau} it has been shown that there are natural classes of flows with the SC property: a typical flow on a fixed probability Borel space, special flows over a rotation by a ``generic'' $\alpha\in[0,1)$ under a smooth roof function which is not a trigonometrical polynomial and special flows over rotations by $\alpha\in [0,1)$ with unbounded partial quotients under some piecewise absolutely continuous roof function.

Notice that by Proposition~\ref{pr:4} and Proposition~\ref{pr:my} whenever a weakly mixing flow enjoys the CS property then its weakly mixing distal extension has the JP property.

A natural source of examples of flows with the CS property is given by the following result.
\begin{pr}[\cite{lpr}]\label{pr:wy}
Assume that for $t_n\to\infty$
\begin{equation}\label{eq:zbieznosc}
T^{t_n} \to \int T^t dP(t)
\end{equation}
for a probability Borel measure $P$ on $\R$ which is not a Dirac measure and such that $\widehat{P}$ is analytic\footnote{ This, for example, holds whenever $P$ has bounded support.}. Then $\sigma=\sigma_{\cT}$ is singular with respect to the convolution of any two continuous measures.
\end{pr}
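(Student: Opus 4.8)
\noindent The plan is to pass to the spectral model of $\cT$ and read the hypothesis~\eqref{eq:zbieznosc} as a statement about the reduced maximal spectral type $\sigma$. Realizing the cyclic space of a vector of maximal type as $L^2(\sigma)$, the operator $T^t$ becomes multiplication by $e^{its}$ and $\int_\R T^t\,dP(t)$ becomes multiplication by $\widehat P(s)=\int_\R e^{its}\,dP(t)$. The convergence~\eqref{eq:zbieznosc} then says that $\int e^{it_ns}\phi\bar\psi\,d\sigma\to\int \widehat P\,\phi\bar\psi\,d\sigma$ for all $\phi,\psi\in L^2(\sigma)$, and since every $h\in L^1(\sigma)$ factors as $\phi\bar\psi$, this is equivalent to
\begin{equation}\label{eq:star}
\int_\R e^{it_ns}h(s)\,d\sigma(s)\longrightarrow \int_\R \widehat P(s)\,h(s)\,d\sigma(s)\qquad(h\in L^1(\sigma)).
\end{equation}
Inserting $h(s)=e^{i\tau s}g(s)$ into~\eqref{eq:star} gives, for every fixed $\tau\in\R$ and every measure $\kappa\ll\sigma$,
\begin{equation}\label{eq:transl}
\widehat{\kappa}(t_n+\tau)\longrightarrow \widehat{\widehat P\kappa}(\tau),
\end{equation}
so the translates of $\widehat\kappa$ along $t_n$ converge pointwise to the Fourier transform of $\widehat P\,d\kappa$. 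Finally, since $\cT$ is weakly mixing, $\sigma$ is continuous; as $\widehat P$ is analytic and $P$ is not Dirac, $|\widehat P|^2$ is a real-analytic function that is $\le 1$ and equals $1$ at $0$, so $\{|\widehat P|=1\}$ is discrete and $|\widehat P|<1$ $\sigma$-almost everywhere.

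\noindent Next I would reduce the target to self-convolutions. Since $(\rho_1+\rho_2)\ast(\rho_1+\rho_2)\ge 2\,\rho_1\ast\rho_2$, we have $\rho_1\ast\rho_2\ll(\rho_1+\rho_2)^{\ast2}$, so it suffices to prove $\sigma\perp\rho\ast\rho$ for every continuous probability measure $\rho$. Assume, for contradiction, that this fails for some continuous $\rho$; then there is a nonzero continuous measure $\kappa$ with $\kappa\ll\sigma$ and $\kappa\ll\rho\ast\rho$. Lifting through the addition map $\mathrm{Add}\colon(s_1,s_2)\mapsto s_1+s_2$, I would write $\kappa=\mathrm{Add}_\ast\widetilde\kappa$ with $0\neq\widetilde\kappa\ll\rho\otimes\rho$ on $\R^2$, so that $\widehat\kappa(t)=\int_{\R^2}e^{it(s_1+s_2)}\,d\widetilde\kappa$. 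Iterating~\eqref{eq:zbieznosc} yields $T^{kt_n}\to\int T^t\,dP^{\ast k}(t)$, hence~\eqref{eq:transl} holds with $\widehat P$ replaced by $\widehat P^{\,k}$ and $t_n$ by $kt_n$; since $|\widehat P|<1$ $\kappa$-a.e., the limits $\widehat{\widehat P^{\,k}\kappa}$ tend to $0$ pointwise as $k\to\infty$.

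\noindent The heart of the matter is to turn the tension between~\eqref{eq:transl} and the convolution structure of $\kappa$ into a contradiction, and this is the step I expect to be the main obstacle. Conceptually, the product structure forces the weak limit $\lim_n T^{t_n}\times T^{t_n}$ to act on the convolution-spectral space (carrying the type $\rho\ast\rho$ in the realization $\widetilde\kappa\ll\rho\otimes\rho$) as multiplication by $\widehat P(s_1)\widehat P(s_2)$, whereas an operator genuinely built from the flow at the total frequency must be multiplication by a function of $s_1+s_2$ alone; but $\widehat P(s_1)\widehat P(s_2)=\psi(s_1+s_2)$ is the Cauchy exponential equation, and for an analytic $\widehat P$ with $\widehat P(0)=1$ and $|\widehat P|\le1$ it forces $\widehat P(s)=e^{ias}$, i.e. $P=\delta_a$, against our hypothesis. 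To make this rigorous on $\kappa$ I would feed the structure $\widetilde\kappa\ll\rho\otimes\rho$ into~\eqref{eq:transl}: the continuity of $\rho$ (through a Wiener-type averaging of $|\widehat\rho|^2$ over the windows $[t_n,t_n+T]$) together with the analytic rigidity of $\widehat P$ should force $\widehat{\widehat P\kappa}\equiv0$, whence $\widehat P\,\kappa=0$ and, since $\widehat P\neq0$ $\kappa$-a.e., $\kappa=0$, the desired contradiction. This gives $\sigma\perp\rho\ast\rho$ for every continuous $\rho$, and by the polarization reduction $\sigma$ is singular with respect to the convolution of any two continuous measures.
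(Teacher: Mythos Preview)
First, note that the paper does \emph{not} prove this proposition: it is quoted from \cite{lpr}, and the paper only remarks afterwards that ``by repeating word for word the proof of Proposition~\ref{pr:wy} (see~\cite{lpr})'' one gets the mild extension with an extra $a\cdot\Pi$ term. So there is no in-paper argument to compare against; one can only assess your attempt on its own merits.

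Your first two paragraphs are fine: the spectral translation of~\eqref{eq:zbieznosc} into~\eqref{eq:star}--\eqref{eq:transl}, the observation that $|\widehat P|<1$ $\sigma$-a.e.\ when $P$ is non-Dirac and $\widehat P$ is analytic, and the reduction to $\sigma\perp\rho\ast\rho$ via $(\rho_1+\rho_2)^{\ast 2}$ are all correct and standard.

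The third paragraph, however, is not a proof; it is a heuristic in which the central mechanism is missing, and the heuristic itself is misapplied. You assert that ``the product structure forces the weak limit $\lim_n T^{t_n}\times T^{t_n}$ to act on the convolution-spectral space \dots\ as multiplication by $\widehat P(s_1)\widehat P(s_2)$''. But the hypothesis~\eqref{eq:zbieznosc} gives you weak convergence of $e^{it_ns}$ to $\widehat P(s)$ only in $L^\infty(\sigma)$ (equivalently, tested against $L^1(\sigma)$). You have \emph{no} information about the behaviour of $e^{it_ns}$ against the auxiliary continuous measure $\rho$, hence no reason for $e^{it_n s_1}e^{it_n s_2}\to\widehat P(s_1)\widehat P(s_2)$ on $L^1(\widetilde\kappa)$ with $\widetilde\kappa\ll\rho\otimes\rho$. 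What you \emph{do} get from $\kappa\ll\sigma$ is that $e^{it_n(s_1+s_2)}\to\widehat P(s_1+s_2)$ when integrated against functions of $s_1+s_2$ with respect to $\widetilde\kappa$; that is just~\eqref{eq:transl} again and creates no tension with a Cauchy-type equation. The sentence about ``Wiener-type averaging of $|\widehat\rho|^2$'' and ``analytic rigidity of $\widehat P$'' forcing $\widehat{\widehat P\kappa}\equiv 0$ is too vague to carry weight: you never say which quantity is being averaged, why continuity of $\rho$ interacts with~\eqref{eq:transl}, or where analyticity (as opposed to merely $|\widehat P|<1$ $\sigma$-a.e.) is used. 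As written, the argument would ``prove'' the CS property from~\eqref{eq:zbieznosc} for \emph{any} non-Dirac $P$, which is not known and is precisely what the analyticity hypothesis is there to supply. In short: the crucial step --- exploiting analyticity of $\widehat P$ to pass from the one-variable relation~\eqref{eq:transl} to a constraint incompatible with $\kappa\ll\rho\ast\rho$ --- is absent, and you should consult the proof in \cite{lpr} to see how that step is actually carried out.
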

Such flows can be obtained in a natural way, namely as smooth flows on orientable surfaces. In~\cite{Koc76} A.~V.~Kochergin proved that there is a natural class of flows on surfaces which are weakly mixing but not mixing. This result was later extended in~\cite{FLold} to a larger class of flows on surfaces. A property which (together with other properties of the flows under consideration) was used to prove the absence of mixing turned out to be one of the sufficient conditions to obtain in the weak closure of time automorphisms an operator of the form~\eqref{eq:zbieznosc} satisfying the assumptions of Proposition~\ref{pr:wy} (see~\cite{FL?}).

Notice also that by repeating word for word the proof of Proposition~\ref{pr:wy} (see~\cite{lpr}) we obtain the same result in the situation where~\eqref{eq:zbieznosc} is replaced by
\begin{equation*}
T^{t_n} \to a\cdot \Pi+ (1-a)\cdot \int T^t dP(t)
\end{equation*}
for some $a\in(0,1)$.
\subsection{Partial mixing, partial rigidity and $\alpha$-weak mixing}
Let us recall some basic definitions which will be used in what follows.
\begin{df}
$\cT$ is \emph{$\alpha$-partially mixing} for $\alpha \in (0,1)$ along $t_n\to \infty$ if
\begin{equation*}
T^{t_n} \to \alpha \cdot \Pi + (1-\alpha)\cdot J
\end{equation*}
for some $J \in J(\cT)$.
\end{df}
\begin{df}
$\cT$ is \emph{$(1-\alpha)$-partially rigid} for $\alpha \in (0,1)$ along $t_n \to \infty$ if
\begin{equation*}
T^{t_n} \to \alpha \cdot J+ (1-\alpha) \cdot Id
\end{equation*}
for some $J \in J(\cT)$.
\end{df}
\begin{df}[\cite{Ka01},\cite{St}]
$\cT$ is \emph{$\alpha$-weakly mixing} for $\alpha \in (0,1)$ along $t_n \to \infty$ if
\begin{equation*}
T^{t_n} \to \alpha \cdot \Pi + (1-\alpha)\cdot Id,
\end{equation*}
i.e. it is $\alpha$-partially mixing and $(1-\alpha)$-partially rigid along the same subsequence.
\end{df}
\section{Results}
\subsection{Isomorphism problem and JP property}
The following is an immediate consequence of the definition of the JP property.
\begin{pr}\label{pr:piec}
Let $\cT$ enjoy the JP property and let $\cS_1,\cS_2$ be weakly mixing. Assume that $\cT$ is a factor of $\cS_1\times\cS_2$. Then $\cT$ is a factor of $\cS_1$ or $\cS_2$.
\end{pr}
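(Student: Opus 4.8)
The plan is to encode the hypothesis that $\cT$ is a factor of $\cS_1\times\cS_2$ as a single ergodic joining, apply the JP property to it, and then read off from the conclusion of JP that the factor is in fact located in one of the two coordinates. Write $\cT$ on $(X,\mathcal{B},\mu)$ and $\cS_i$ on $(Y_i,\mathcal{C}_i,\nu_i)$, and let $\pi\colon Y_1\times Y_2\to X$ be the factor map, so that $\pi\circ(S_1^t\times S_2^t)=T^t\circ\pi$ for all $t$. First I would form the associated \emph{graph joining} $\lambda\in J(\cT,\cS_1\times\cS_2)$, namely the distribution of $(\pi(y_1,y_2),y_1,y_2)$ under $\nu_1\otimes\nu_2$; equivalently, $\lambda$ corresponds to the Markov operator $\Phi_\lambda(f)=f\circ\pi$, the isometric embedding of $L^2(X,\mu)$ into $L^2(Y_1\times Y_2,\nu_1\otimes\nu_2)$.

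The next step is to check that $\lambda$ is ergodic, so that the JP property is applicable. Since the graph joining is measure-theoretically isomorphic to $\cS_1\times\cS_2$ itself, via $(y_1,y_2)\mapsto(\pi(y_1,y_2),y_1,y_2)$, it suffices to note that $\cS_1\times\cS_2$ is ergodic; this holds because a product of weakly mixing flows is weakly mixing, hence ergodic. In particular $\cT$, being a factor of an ergodic flow, is ergodic, so it makes sense to invoke its JP property. Applying JP to $\lambda\in J^e(\cT,\cS_1\times\cS_2)$ yields $\lambda=\lambda_{X,Y_1}\otimes\nu_2$ or $\lambda=\lambda_{X,Y_2}\otimes\nu_1$; by symmetry I treat only the first case.

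It remains to deduce from $\lambda=\lambda_{X,Y_1}\otimes\nu_2$ that $\cT$ is a factor of $\cS_1$, and this is where the real work lies. The equality says that, under $\lambda$, the $X$-coordinate together with $Y_1$ is independent of $Y_2$; in operator terms (Remark~\ref{uw:uwaga}) it says $\Phi_\lambda=p_1\circ\Phi_\lambda$, i.e.\ the range of the embedding $f\mapsto f\circ\pi$ lies in $L^2(Y_1)\otimes\raz$. The heart of the argument is to upgrade this to the statement that $\pi$ itself is $\mathcal{C}_1$-measurable. Concretely, writing $\psi(y_1)=\int f(\pi(y_1,y_2))\,d\nu_2(y_2)$, the independence together with the product structure of $\nu_1\otimes\nu_2$ forces $\int_B f(\pi(y_1,y_2))\,d\nu_2(y_2)=\psi(y_1)\,\nu_2(B)$ for every $B$ and $\nu_1$-a.e.\ $y_1$, whence $f\circ\pi=\psi$ does not depend on $y_2$. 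As this holds for all bounded $f$, the map $\pi$ agrees a.e.\ with a map $\pi_1\colon Y_1\to X$, and the intertwining relation for $\pi$ descends to $\pi_1\circ S_1^t=T^t\circ\pi_1$. Thus $\pi_1$ realizes $\cT$ as a factor of $\cS_1$, completing the argument. I expect this last translation — from the probabilistic/operator independence furnished by JP to the genuine coordinate-measurability of the factor map — to be the main obstacle, though the product structure of $\nu_1\otimes\nu_2$ makes it manageable.
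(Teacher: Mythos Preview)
Your proof is correct and follows essentially the same route as the paper's: form the graph joining associated to the factor map, observe it is ergodic (since $\cS_1\times\cS_2$ is weakly mixing), apply JP in its operator formulation (Remark~\ref{uw:uwaga}) to get $\Phi_\lambda(L^2(X))\subset L^2(Y_1)\otimes\raz$ or the symmetric inclusion, and conclude that $\cT$ sits inside one coordinate. The paper stops at this $L^2$-inclusion and declares the proof complete; your explicit verification that the inclusion forces $\mathcal{C}_1$-measurability of $\pi$ is correct but routine---contrary to your expectation, it is not where the substance of the argument lies.
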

\begin{proof}
Let $\Phi \colon Y_1\times Y_2 \to X$ be such that $\cT \circ \Phi = \Phi \circ (\cS_1\times\cS_2)$. Then
$\Phi \colon L^2(X) \to L^2(Y_1\times Y_2) \text{ given by }\Phi f=f\circ\Phi$
determines an ergodic joining, whence by Remark~\ref{uw:uwaga}, we have $\Phi=p_1\circ \Phi$ or $\Phi=p_2\circ \Phi$ which means that
\begin{equation*}
\Phi(L^2(X)) \subset L^2(Y_1)\otimes \raz \text{ or }\Phi(L^2(X)) \subset \raz \otimes L^2(Y_2).
\end{equation*}
This completes the proof.
\end{proof}
\begin{uw}\label{uw:inpart}
In particular, the proof of Proposition~\ref{pr:piec} shows the following: if $\mathcal{B}\subset \mathcal{C}_1\otimes \mathcal{C}_2$ is a factor-$\sigma$-algebra of $\cS_1\times \cS_2$ representing the action $\cT$ then either 
$\mathcal{B}\subset \mathcal{C}_1$ or $\mathcal{B}\subset \mathcal{C}_2$. The extension of this fact to more than two flows $\cS_i$ is straightforward (the proofs can be repeated word for word). More precisely, for any $d\geq 1$ and $\Phi_\lambda\in J^e(\cT,\cS_1\times\dots\times \cS_d)$ the inclusion
\begin{equation*}
\Phi_\lambda(L^2(X))\subset \sum_{i=1}^{d}L^2(Y_i)
\end{equation*}
implies that 
\begin{equation*}
\Phi_\lambda(L^2(X))\subset L^2(Y_i)\text{ for some }1\leq i\leq d.
\end{equation*}
\end{uw}
\begin{proof}[Proof of Theorem~\ref{tw2}, part (i)]
We will provide the proof for $d=2$. The extension to the product of $d$ JP flows is straightforward and~\eqref{pr1} holds whenever $\cT_1,\dots,\cT_d$ are JP.

Let $\Phi \colon Y_1\times Y_2 \to X_1\times X_2$ determine the isomorphism between $\cS_1\times\cS_2$ and $\cT_1\times \cT_2$. Using Proposition~\ref{pr:piec}, we may assume without loss of generality that
\begin{equation*}
\Phi(L^2(X_1)\otimes \raz) \subset L^2(Y_1)\otimes \raz \text{ and }\Phi(\raz \otimes L^2(X_2)) \subset \raz \otimes L^2(Y_2).
\end{equation*}
Since $\Phi$ is an isomorphism and the image of $\sigma$-algebras $\cB_1\otimes \{\emptyset,X_2\}$ and $\{\emptyset, X_1\}\otimes \cB_2$ via $\Phi^{-1}$ generates $\cC_1\otimes \cC_2$, we have more:
\begin{equation*}
\Phi(L^2(X_1)\otimes \raz) = L^2(Y_1)\otimes \raz \text{ and }\Phi(\raz \otimes L^2(X_2)) = \raz \otimes L^2(Y_2),
\end{equation*}
which completes the proof.
\end{proof}
Moreover, the proof shows the following.
\begin{wn}\label{co:ce1}
Let $\cT_1,\dots,\cT_d$ be weakly mixing JP flows. Then the centralizer $C(\cT_1\times \dots \times \cT_d)$ of $\cT_1\times \dots \times \cT_d$ consists of transformations belonging to
\begin{equation*}
C(\cT_{\sigma(1)})\times \dots \times C(\cT_{\sigma(d)}),
\end{equation*}
where the permutation $\sigma\in S(d)$ is such that $\sigma(i)=j$ implies $\cT_i\simeq \cT_j$.
\end{wn}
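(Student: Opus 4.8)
The plan is to leverage the proof of Theorem~\ref{tw2}(i) in a structural way, viewing any element $R\in C(\cT_1\times\dots\times\cT_d)$ as an isomorphism of the product flow with itself and then applying the coordinate-rigidity already established. For concreteness I would again present the argument for $d=2$ and indicate that the general case follows verbatim, since the only ingredient is the single-factor splitting in Remark~\ref{uw:inpart}. So let $R\in C(\cT_1\times\cT_2)$; then $R$ determines a Markov operator $\Phi_R\colon L^2(X_1\times X_2)\to L^2(X_1\times X_2)$ commuting with $(T_1^t\times T_2^t)_{t\in\R}$, and as in the proof of Theorem~\ref{tw2}(i) this operator corresponds to an (ergodic, since $R$ is an automorphism intertwining the ergodic product with itself) self-joining. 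The point is that $R$ is an isomorphism, hence invertible, so I can run the JP splitting argument on both $R$ and $R^{-1}$.

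First I would apply Proposition~\ref{pr:piec} (equivalently Remark~\ref{uw:inpart}) to each coordinate $\sigma$-algebra. Restricting $\Phi_R$ to $L^2(X_1)\otimes\raz$, the image is a factor realizing $\cT_1$ sitting inside $\cT_1\times\cT_2$; since $\cT_1$ enjoys JP (Remark~\ref{uw:inpart} with $\cT=\cT_1$), we get $\Phi_R(L^2(X_1)\otimes\raz)\subset L^2(X_{j})\otimes\raz$ for some $j\in\{1,2\}$, and symmetrically the second coordinate is sent into one of the two coordinate subspaces. Because $R$ is invertible, these two inclusions cannot collapse into the same coordinate (otherwise the image would not generate the full $\sigma$-algebra), so they define a permutation $\sigma\in S(2)$ with $\Phi_R(L^2(X_i)\otimes\raz)\subset L^2(X_{\sigma(i)})\otimes\raz$. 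Applying the same reasoning to $R^{-1}$ and using invertibility upgrades these inclusions to equalities, exactly as in the displayed upgrade step of the Theorem~\ref{tw2}(i) proof:
\begin{equation*}
\Phi_R(L^2(X_i)\otimes\raz)=L^2(X_{\sigma(i)})\otimes\raz.
\end{equation*}

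Having fixed the permutation $\sigma$, the restriction of $R$ to the $i$-th coordinate $\sigma$-algebra is then an isomorphism of $\cT_i$ onto $\cT_{\sigma(i)}$; in particular $\cT_i\simeq\cT_{\sigma(i)}$, which forces $\sigma(i)=j$ to imply $\cT_i\simeq\cT_j$, and $R$ itself is the product of these coordinate isomorphisms. Reading this in terms of the centralizer: after identifying $\cT_i$ with $\cT_{\sigma(i)}$ via this restriction, $R$ lies in $C(\cT_{\sigma(1)})\times\dots\times C(\cT_{\sigma(d)})$, giving the desired description. The main obstacle I anticipate is the careful bookkeeping of the upgrade from inclusions to equalities and the verification that $\sigma$ is genuinely a permutation (injectivity of the coordinate assignment), both of which hinge on the invertibility of $R$ and on the fact that the images of the coordinate $\sigma$-algebras under $\Phi_R^{-1}=\Phi_{R^{-1}}$ jointly generate $\cC_1\otimes\cdots\otimes\cC_d$; this is precisely the measure-theoretic content that the JP splitting alone does not give, and it is where the argument must use that $R$ is an isomorphism rather than merely a Markov intertwiner.
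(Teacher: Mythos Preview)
Your proposal is correct and follows exactly the approach the paper intends: the corollary is stated immediately after the proof of Theorem~\ref{tw2}(i) with the remark ``Moreover, the proof shows the following,'' and your argument simply spells out that specialization by taking $\cS_i=\cT_i$ and $\Phi=R$. The use of Proposition~\ref{pr:piec}/Remark~\ref{uw:inpart} on each coordinate, the injectivity of the coordinate assignment from invertibility of $R$, and the upgrade of inclusions to equalities via the generating property are precisely the steps implicit in the paper's proof.
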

Thouvenot's question has clearly a negative answer in the infinite case. Notice that for any partition of $\N$ into subsets $\N_1,\N_2,\dots$ we have
\begin{equation*}
\cT_1\times\cT_2\times\dots \simeq \left(\times_{i\in\N_1}\cT_i\right) \times \left(\times_{i\in\N_2}\cT_i\right)\times\dots,
\end{equation*}
so it suffices to consider e.g. $\N_1=\{1,2\}$, $\N_i=\{i+1\}$ for $i\geq 2$, a weakly mixing $\cT$ such that $\cT \not\simeq \cT\times\cT$ and take $\cT_i=\cT$ for all $i\in\N$. 

Therefore, the best infinite version of Theorem~\ref{tw2} we can hope for is that the isomorphism $\Phi$ of $\cT_1\times \cT_2\times\dots$ and $\cS_1\times \cS_2\times\dots$ implies that there exists a partition of $\N$ into subsets $\N_1,\N_2,\dots$ such that $\times_{j\in \N_i}\cT_j$ is isomorphic to $\cS_i$ via $\Phi$. It turns out that this holds true if we assume that the flows $\cT_i$ enjoy the JP property.
\begin{pr}\label{pr:inftyJP}
Let $\mathcal{T}_i$ be weakly mixing flows satisfying the JP property for $i\geq 1$. For flows $\cS_i$ for $i\geq 1$ such that $\mathcal{T}_1\times\mathcal{T}_2\times \dots $ is isomorphic to $\mathcal{S}_1\times \mathcal{S}_2\times \dots $ via $\Phi\colon Y_1\times Y_2 \times \dots \to X_1\times X_2 \times \dots$, there exists a partition of $\N$ into subsets $\N_1,\N_2,\dots$ such that $\Phi$ determines an isomorphism between $\times_{j\in \N_i}\cT_j$ and $\cS_i$ for $i\geq 1$. If we additionally assume that also $\mathcal{S}_i$ enjoy the JP property for $i\geq 1$, then there exists a permutation $\sigma\colon \N\to\N$ such that $\Phi$ determines an isomorphism between $\mathcal{T}_i$ and $\mathcal{S}_{\sigma(i)}$.
\end{pr}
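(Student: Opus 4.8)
The plan is to localise each factor $\cT_i$ inside a single coordinate of the $\cS$-product using the JP property together with a tail-triviality argument, and then to recover each $\cS_k$ exactly as the join of those $\cT_j$'s that land in its coordinate. First I would record that every $\cS_i$, and every finite or infinite sub-product of the $\cS_j$'s, is weakly mixing: the product $\cT_1\times\cT_2\times\cdots$ is weakly mixing, hence so is the isomorphic product $\cS_1\times\cS_2\times\cdots$, and weak mixing passes to factors and is preserved under products. Write $\mathcal{B}_i$ for the coordinate $\sigma$-algebra of $X_i$ inside $\cB=\bigotimes_j\mathcal{B}_j$ and $\mathcal{A}_i:=\Phi^{-1}(\mathcal{B}_i)\subset\cC=\bigotimes_j\mathcal{C}_j$ for the factor representing $\cT_i$ on the $\cS$-side. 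For each $n\geq1$ split $\cS_1\times\cS_2\times\cdots=\cS_1\times\cdots\times\cS_n\times\cS_{>n}$, where $\cS_{>n}=\times_{j>n}\cS_j$ is weakly mixing. Applying Proposition~\ref{pr:piec} repeatedly to the JP flow $\cT_i$ (first to $\cS_1\times\cS_{>1}$, then, if $\cT_i$ sits in $\cS_{>1}$, to $\cS_2\times\cS_{>2}$, and so on) yields for each $n$ the dichotomy that either $\mathcal{A}_i\subset\mathcal{C}_k$ for some $k\leq n$, or $\mathcal{A}_i\subset\mathcal{C}_{>n}:=\bigotimes_{j>n}\mathcal{C}_j$. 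Each joining used here is ergodic, being a factor of the graph joining of the isomorphism $\Phi$, so the hypotheses of Proposition~\ref{pr:piec} are met.

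The recursion must terminate: if $\mathcal{A}_i\subset\mathcal{C}_{>n}$ held for all $n$, then $\mathcal{A}_i$ would sit in the tail $\sigma$-algebra $\bigcap_n\mathcal{C}_{>n}$, which is trivial by Kolmogorov's zero--one law since the coordinates are independent under the product measure; this contradicts the non-triviality of $\cT_i$. Hence there is a unique $k=k(i)$ with $\mathcal{A}_i\subset\mathcal{C}_k$ (uniqueness because a non-trivial factor cannot lie in two independent coordinates). Put $\N_k:=\{i:k(i)=k\}$ and $\mathcal{D}_k:=\bigvee_{i\in\N_k}\mathcal{A}_i\subset\mathcal{C}_k$, the factor representing $\times_{j\in\N_k}\cT_j$. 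Since $\Phi$ is an isomorphism, $\bigvee_i\mathcal{A}_i=\cC$, that is $\bigvee_k\mathcal{D}_k=\bigvee_k\mathcal{C}_k$.

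The crux is then the measure-theoretic fact that independent $\sigma$-algebras $\{\mathcal{C}_k\}$ with sub-$\sigma$-algebras $\mathcal{D}_k\subset\mathcal{C}_k$ satisfying $\bigvee_k\mathcal{D}_k=\bigvee_k\mathcal{C}_k$ must satisfy $\mathcal{D}_k=\mathcal{C}_k$ for every $k$. I would prove this by fixing $k$ and a mean-zero $f\in L^2(\mathcal{C}_k)$ and checking, using independence of the coordinates (hence of the $\mathcal{D}_m$), that $f$ and $E(f\mid\mathcal{D}_k)$ have equal inner products against every product $\prod_m g_m$ with $g_m\in L^2(\mathcal{D}_m)$ and all but finitely many $g_m$ constant; since such products span $L^2(\bigvee_m\mathcal{D}_m)=L^2(\cC)$ and both $f$ and $E(f\mid\mathcal{D}_k)$ lie in this space, this forces $f=E(f\mid\mathcal{D}_k)$, i.e.\ $f$ is $\mathcal{D}_k$-measurable. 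Thus $\mathcal{D}_k=\mathcal{C}_k$ and $\Phi$ restricts to an isomorphism between $\times_{j\in\N_k}\cT_j$ and $\cS_k$, giving the first assertion. The hypothesis $\mathcal{D}_k\subset\mathcal{C}_k$ is essential, as simple examples with $\mathcal{D}_k$ generated by a product of two coordinates show.

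Finally, when the $\cS_i$ are also JP, I would run the same argument for $\Phi^{-1}$ to obtain $l\colon\N\to\N$ with $\Phi(\mathcal{C}_i)\subset\mathcal{B}_{l(i)}$. Chasing the inclusions $\mathcal{A}_j\subset\mathcal{C}_{k(j)}$ and $\Phi(\mathcal{C}_{k(j)})\subset\mathcal{B}_{l(k(j))}$ gives $\mathcal{B}_j=\Phi(\mathcal{A}_j)\subset\mathcal{B}_{l(k(j))}$, and independence of distinct coordinates forces $l(k(j))=j$; symmetrically $k(l(i))=i$. Hence $k$ is a permutation of $\N$ with inverse $l$, every $\N_k$ is the singleton $\{l(k)\}$, and setting $\sigma:=k$ yields the isomorphisms $\cT_i\simeq\cS_{\sigma(i)}$ via $\Phi$. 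I expect the main obstacles to be exactly the finite-to-infinite localisation (termination of the recursion via tail-triviality) and the independence lemma recovering each $\mathcal{C}_k$ exactly; the JP input itself is a straightforward iteration of Proposition~\ref{pr:piec}.
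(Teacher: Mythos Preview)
Your proof is correct and follows essentially the same route as the paper: localise each $\cT_i$ inside a single $\cS$-coordinate via the JP dichotomy, terminate the recursion by Kolmogorov's zero--one law, and then argue that the sub-$\sigma$-algebras $\mathcal{D}_k$ exhaust the $\mathcal{C}_k$'s. Two minor differences are worth noting. First, the paper splits as $(\mathcal{C}_1\otimes\cdots\otimes\mathcal{C}_k)\otimes(\mathcal{C}_{k+1}\otimes\cdots)$ and only afterwards reduces from the finite block to a single coordinate, whereas you peel off one coordinate at a time; both reach the same conclusion and your version makes the invocation of Proposition~\ref{pr:piec} more transparent. Second, for the final assertion the paper simply observes that a JP flow cannot be realised (via the identity joining) as a non-trivial product of two weakly mixing flows, hence each $\N_i$ is a singleton; you instead run the whole localisation argument for $\Phi^{-1}$ and show that the resulting maps $k$ and $l$ are mutual inverses. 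Your symmetric argument is slightly longer but has the advantage of avoiding any separate discussion of whether some $\N_i$ could be empty. Your explicit ``independence lemma'' ($\mathcal{D}_k\subset\mathcal{C}_k$ with $\bigvee_k\mathcal{D}_k=\bigvee_k\mathcal{C}_k$ forces $\mathcal{D}_k=\mathcal{C}_k$) fills in a step the paper only states.
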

\begin{proof}
Fix $i\in\mathbb{N}$. Let $\Phi^{-1}(\mathcal{B}_i)\subset \mathcal{C}_1\otimes \mathcal{C}_2\otimes \dots$ be the factor-$\sigma$-algebra of $\mathcal{S}_1\times \mathcal{S}_2\times \dots$ representing the action $\mathcal{T}_i$. We claim that there exists $k\geq 1$ such that $\Phi^{-1}(\mathcal{B}_i) \subset \mathcal{C}_1\otimes\dots \otimes \mathcal{C}_k$. Indeed, notice that for $k\geq 1$ we have
\begin{equation*}
\Phi^{-1}(\mathcal{B}_i)\subset \left(\mathcal{C}_1\otimes\dots \otimes \mathcal{C}_k\right)\otimes \left(\mathcal{C}_{k+1}\otimes \mathcal{C}_{k+2} \otimes \dots \right).
\end{equation*}
By the JP property, either $\Phi^{-1}(\mathcal{B}_i) \subset \mathcal{C}_1\otimes\dots \otimes \mathcal{C}_k$ or $\Phi^{-1}(\mathcal{B}_i)\subset \mathcal{C}_{k+1}\otimes \mathcal{C}_{k+2} \otimes \dots$ If for all $k\geq 1$ we have $\Phi^{-1}(\mathcal{B}_i)\subset \mathcal{C}_{k+1}\otimes \mathcal{C}_{k+2} \otimes \ldots $ then
\begin{equation*}
\Phi^{-1}(\mathcal{B}_i)\subset \bigcap_{k\geq 1} \mathcal{C}_{k+1}\otimes \mathcal{C}_{k+2} \otimes \ldots 
\end{equation*}
and by the Kolmogorov's zero-one law $\Phi^{-1}(\mathcal{B}_i)$ is trivial, which is impossible. Therefore, for some $k\geq 1$
\begin{equation*}
\Phi^{-1}(\mathcal{B}_i) \subset \mathcal{C}_1\otimes\dots \otimes \mathcal{C}_k.
\end{equation*}
Using again the JP property, we obtain, as in the finite case, that for some $\sigma(i)\geq 1$
\begin{equation}\label{eq:disj}
\Phi^{-1}(\mathcal{B}_i) \subset \mathcal{C}_{\sigma(i)}.
\end{equation}
Setting for $i\geq 1$
\begin{equation*}
\N_i:=\{j\in\N\colon \Phi^{-1}(\mathcal{B}_j)\subset \mathcal{C}_i\}
\end{equation*}
we obtain a partition of $\N$ (indeed, $\Phi$ is an isomorphism, whence $\cup_{i\in \N}\N_i=\N$ and the sets $\N_i$ are disjoint by~\eqref{eq:disj}). Moreover, $\sigma$-algebras $\Phi^{-1}(\mathcal{B}_j)$ for $j\in\N_i$ are independent and generate the whole $\sigma$-algebra $\mathcal{C}_i$, whence
\begin{equation*}
\mathcal{S}_i\simeq \times_{j\in\N_i}\cT_j,
\end{equation*}
which completes the proof of the first assertion.

If we additionally assume that the flows $\cS_i$ enjoy the JP property for $i\geq 1$, the sets $\N_i$ for $i\geq 1$ are singletons and therefore determine the permutation $\sigma\colon \N\to\N$ such that $\Phi$ is an isomorphism between $\cT_i$ and $\cS_{\sigma(i)}$.
\end{proof}
As a direct consequence of the above proposition we obtain the following result.
\begin{wn}
Let $\cT_i$ for $i\geq 1$ be weakly mixing flows enjoying the JP property. Then the centralizer $C(\cT_1\times\cT_2\times \dots)$ of $\cT_1\times\cT_2\times \dots$ consists of transformations belonging to
\begin{equation*}
C(\cT_{\sigma(1)})\times C(\cT_{\sigma(2)})\times \dots,
\end{equation*}
where the permutation $\sigma\colon \N\to\N$ is such that $\sigma(i)=j$ implies $\cT_i\simeq \cT_j$.
\end{wn}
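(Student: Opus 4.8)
The plan is to obtain the Corollary as an immediate application of Proposition~\ref{pr:inftyJP} with $\cS_i=\cT_i$ for every $i$. First I would note that an element $R$ of the centralizer $C(\cT_1\times\cT_2\times\dots)$ is, by definition, an invertible measure-preserving transformation of $X_1\times X_2\times\dots$ commuting with $(T_1^t\times T_2^t\times\dots)_{t\in\R}$, hence a self-isomorphism of the product flow. Feeding $\Phi=R$ into Proposition~\ref{pr:inftyJP}, with both the source and the target equal to $\cT_1\times\cT_2\times\dots$, makes the extra hypothesis of that proposition (that the $\cS_i$ enjoy JP) automatic, so I obtain a permutation $\sigma\colon\N\to\N$ for which $R$ determines an isomorphism between $\cT_i$ and $\cT_{\sigma(i)}$. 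In particular $\cT_i\simeq\cT_{\sigma(i)}$, which is precisely the stated constraint that $\sigma(i)=j$ forces $\cT_i\simeq\cT_j$.

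Next I would read off the product structure of $R$ from the proof of Proposition~\ref{pr:inftyJP}. That proof produces the inclusion $R^{-1}(\cB_i)\subset\cC_{\sigma(i)}$ of coordinate $\sigma$-algebras; here $\cC_j=\cB_j$, and because the accompanying sets $\N_i$ are now singletons (the flows on both sides enjoy JP) the permutation $\sigma$ is a genuine bijection and the inclusions sharpen to equalities $R^{-1}(\cB_i)=\cB_{\sigma(i)}$. This identity says that the $i$-th coordinate of $R$ depends only on the $\sigma(i)$-th input coordinate, so $R$ factors as $R(x)_i=R_i(x_{\sigma(i)})$ for measure-preserving maps $R_i\colon X_{\sigma(i)}\to X_i$; commutation with the flow forces each $R_i$ to conjugate $\cT_{\sigma(i)}$ to $\cT_i$. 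After composing with a fixed identification $\cT_i\simeq\cT_{\sigma(i)}$ this exhibits $R$ as an element of $C(\cT_{\sigma(1)})\times C(\cT_{\sigma(2)})\times\dots$, which is the assertion.

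I expect the only step requiring real care to be the passage from the $\sigma$-algebra identity $R^{-1}(\cB_i)=\cB_{\sigma(i)}$ to the explicit coordinate-wise factorization of $R$, i.e.\ checking that a measure isomorphism matching up single coordinate $\sigma$-algebras one-to-one must genuinely permute the coordinates with no mixing between them. This is the point where one uses that the coordinate $\sigma$-algebras are mutually independent and jointly generating and that the $\N_i$ are singletons; once the inclusions are upgraded to equalities it is a standard fact about factors of (countable) products, and everything else is a direct quotation of Proposition~\ref{pr:inftyJP}.
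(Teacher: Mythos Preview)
Your proposal is correct and follows exactly the approach the paper intends: the paper gives no proof at all, stating only that the corollary is ``a direct consequence of the above proposition,'' i.e.\ of Proposition~\ref{pr:inftyJP} applied with $\cS_i=\cT_i$ and $\Phi=R$. Your write-up simply spells out the details of this deduction more fully than the paper does.
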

\subsection{JP property as a weaker version of disjointness}
Let $\cT$ and $\cS$ be ergodic flows on $(X,\cB,\mu)$ and $(Y,\cC,\nu)$ respectively. We will now compare $\alpha$-partial rigidity and $\beta$-partial mixing. Let us first recall a lemma.
\begin{lm}[\cite{lpr}]\label{lm:showjp}
Assume that $\lambda \in J^e(\cT,\cS_1\times \cS_2)$ satisfies
\begin{equation*}
\Phi_{\lambda}(L^2_0(X,\mathcal{B},\mu)) \subset L^2_0(Y_1,\cC_1,\nu_1) \oplus L^2_0(Y_2,\cC_2,\nu_2).
\end{equation*}
Then $\lambda=\lambda_{X,Y_1}\otimes \nu_2$ or $\lambda=\lambda_{X,Y_2}\otimes \nu_1$.
\end{lm}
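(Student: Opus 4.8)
The plan is to identify $\lambda$ with one of the two distinguished \emph{product} joinings $\lambda_1 := \lambda_{X,Y_1}\otimes\nu_2$ and $\lambda_2 := \lambda_{X,Y_2}\otimes\nu_1$, which is exactly the asserted dichotomy. Writing $\Phi := \Phi_\lambda$ and recalling that $\Phi f = \mathbb{E}_\lambda[f\mid \cC_1\otimes\cC_2]$, the hypothesis says that the mixed component of $\Phi f$ vanishes for every $f\in L^2_0(X)$, so that $\Phi f = Af + Bf$, where $Af := \mathbb{E}_\lambda[f\mid \cC_1]\in L^2_0(Y_1)$ and $Bf := \mathbb{E}_\lambda[f\mid \cC_2]\in L^2_0(Y_2)$. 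Here I use that the $(Y_1,Y_2)$-marginal of any $\lambda\in J(\cT,\cS_1\times\cS_2)$ equals $\nu_1\otimes\nu_2$, so that the projections $p_1,p_2$ of Remark~\ref{uw:uwaga} are precisely the conditional expectations onto $\cC_1,\cC_2$; the operators $A$ and $B$ intertwine $\cT$ with $\cS_1$ and with $\cS_2$ respectively.

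First I would observe that $A$ and $B$ are exactly the non-trivial parts of the Markov operators of $\lambda_1$ and $\lambda_2$: since under $\lambda_1$ the coordinate $Y_2$ is independent of $(X,Y_1)$, one has $\Phi_{\lambda_1}f = \mathbb{E}[f]+Af$, and likewise $\Phi_{\lambda_2}f = \mathbb{E}[f]+Bf$. Comparing constants and zero-mean parts then yields the operator identity $\Phi = \Phi_{\lambda_1}+\Phi_{\lambda_2}-\Pi_{X,Y_1\times Y_2}$. Because the correspondence $\lambda\mapsto\Phi_\lambda$ is affine and sends the product measure to $\Pi_{X,Y_1\times Y_2}$, this translates into the measure identity
\begin{equation*}
\lambda + \mu\otimes\nu_1\otimes\nu_2 = \lambda_1 + \lambda_2.
\end{equation*}
This is the crux of the argument: it places $\lambda$ inside an affine relation among four genuine joinings of $\cT$ with $\cS_1\times\cS_2$.

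To finish I would invoke uniqueness of the ergodic decomposition. Dividing the displayed identity by $2$ presents one and the same joining as two convex combinations, namely $\tfrac12\lambda_1+\tfrac12\lambda_2 = \tfrac12\lambda+\tfrac12(\mu\otimes\nu_1\otimes\nu_2)$. Provided all four measures are ergodic, uniqueness forces the multisets $\{\lambda_1,\lambda_2\}$ and $\{\lambda,\mu\otimes\nu_1\otimes\nu_2\}$ to coincide, so in particular $\lambda\in\{\lambda_1,\lambda_2\}$, which is precisely the conclusion. The degenerate case $\lambda_1=\lambda_2$ is disposed of directly: then $\tfrac12\lambda+\tfrac12(\mu\otimes\nu_1\otimes\nu_2)$ equals the extreme point $\lambda_1$, so extremality forces $\lambda=\lambda_1$.

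The step I expect to be the main obstacle is verifying that $\lambda_1$, $\lambda_2$ and $\mu\otimes\nu_1\otimes\nu_2$ are ergodic (ergodicity of $\lambda$ itself is assumed). This is exactly where the weak mixing of $\cS_1$ and $\cS_2$ enters: the marginal $\lambda_{X,Y_1}$ is ergodic, being a factor of the ergodic system $\lambda$, and the product of an ergodic flow with the weakly mixing flow $\cS_2$ is again ergodic, so $\lambda_1=\lambda_{X,Y_1}\otimes\nu_2$ is ergodic; symmetrically for $\lambda_2$, while $\mu\otimes\nu_1\otimes\nu_2$ is ergodic because $\cS_1\times\cS_2$ is weakly mixing and $\cT$ is ergodic. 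Once these ergodicity claims are in place, the decomposition step is immediate, and no estimate beyond the affine identity is needed.
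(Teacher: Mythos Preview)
The paper does not give its own proof of this lemma; it is quoted verbatim from~\cite{lpr}, so there is no in-paper argument to compare your proposal against. I can therefore only assess the proposal on its own merits.

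Your argument is correct. The identification $p_i\circ\Phi_\lambda=\Phi_{\lambda_i}$ on $L^2_0(X)$ is exactly right (it follows from the tower property of conditional expectations and the fact that the $Y_1\times Y_2$-marginal of $\lambda$ is $\nu_1\otimes\nu_2$), and the hypothesis then reads precisely as the operator identity $\Phi_\lambda=\Phi_{\lambda_1}+\Phi_{\lambda_2}-\Pi$, hence $\lambda+\mu\otimes\nu_1\otimes\nu_2=\lambda_1+\lambda_2$. The finish via uniqueness of the ergodic decomposition in the simplex $J(\cT,\cS_1\times\cS_2)$ is clean, and your handling of the degenerate case $\lambda_1=\lambda_2$ through extremality is the right way to close that loophole.

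One point worth making explicit: the lemma as printed here carries no hypothesis on $\cS_1,\cS_2$, yet your proof genuinely needs their weak mixing to secure ergodicity of $\lambda_1=\lambda_{X,Y_1}\otimes\nu_2$, of $\lambda_2$, and of $\mu\otimes\nu_1\otimes\nu_2$. You flag this yourself, and it is the standing assumption both in the JP framework of~\cite{lpr} and in the one place the present paper applies the lemma (Proposition~\ref{pr:15}(iii), where the $\cS_i$ are $(1-\alpha)$-partially mixing along $t_n$, which forces weak mixing since for a putative eigenfunction $f\in L^2_0$ one would get $\|f\|^2=|\lim\langle S_i^{t_n}f,f\rangle|\le\alpha\|f\|^2$). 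So the assumption is harmless in context, but it should be stated as part of the hypotheses when the proof is written out.
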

\begin{pr}\label{pr:15}
Let $\alpha,\beta\in[0,1]$. Assume that $\cT$ is $\alpha$-partially rigid and $\cS$ is $\beta$-partially mixing along the same time-sequence, i.e. for some $t_n \to \infty$ we have
\begin{equation*}
T^{t_n}\to \alpha\cdot \text{Id}+(1-\alpha)\cdot J\text{ and }S^{t_n} \to \beta\cdot\Pi+ (1-\beta)\cdot K
\end{equation*}
for some $J\in J(\cT)$ and $K\in J(\cS)$. 
\begin{itemize}
\item[(i)]
If $\alpha=\beta=1$ then $\cT$ and $\cS$ are spectrally disjoint.
\item[(ii)]
If $\alpha+\beta>1$ then $\cT\perp\cS$.
\end{itemize}
Let $\alpha\in(0,1)$ and let $\cS_1,\cS_2$ be ergodic flows on $(Y_1,\mathcal{C}_1,\nu_1)$ and $(Y_2,\mathcal{C}_2,\nu_2)$ respectively.
\begin{itemize}
\item[(iii)]
If $\cT$ is $(1-\alpha)$-weakly mixing\footnote{ When $\alpha+\beta=1$, it is not true anymore that $\cS\perp \cT$. It suffices to consider $\alpha$-weakly mixing $\cT=\cS$ 
and take their diagonal joining: $\Delta\in J(\cT,\cS)$ given by $\Delta(A\times B)=\mu(A\cap B)$ for $A,B\in\cB$. However, as $(iii)$ shows, in this case (provided that $\cT$ is $(1-\alpha)$-weakly mixing) we observe some kind of JP property for $\cT$.} and $\cS_1,\cS_2$ are $(1-\alpha)$-partially mixing along $t_n$ then  for every $\lambda\in J^e(\mathcal{T},\cS_1\times \cS_2)$ we have
\begin{equation*}
\lambda=\lambda_{X, Y_1}\otimes \nu_2 \text{ or }\lambda=\lambda_{X,Y_2}\otimes \nu_1,
\end{equation*} 
\end{itemize}
\end{pr}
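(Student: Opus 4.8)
The plan is to prove parts (i) and (ii) first, since they are the simplest and will likely feed into the intuition for (iii). For (i) and (ii), I would fix a joining $\rho\in J(\cT,\cS)$, pass it through the weak limits along $t_n$, and exploit the invariance of $\rho$ under $(T^{t_n}\times S^{t_n})$. Concretely, the intertwining Markov operator $\Phi_\rho\colon L^2(Y)\to L^2(X)$ satisfies $\Phi_\rho\circ S^{t_n}=T^{t_n}\circ \Phi_\rho$ for every $n$. Taking weak limits on both sides gives
\begin{equation*}
\Phi_\rho\circ\bigl(\beta\cdot\Pi_Y+(1-\beta)\cdot K\bigr)=\bigl(\alpha\cdot\mathrm{Id}+(1-\alpha)\cdot J\bigr)\circ\Phi_\rho.
\end{equation*}
Restricting to $L^2_0(Y)$ (the orthocomplement of the constants), the term $\beta\cdot\Pi_Y$ kills everything, so the left side is $(1-\beta)\Phi_\rho K$, an operator of norm at most $(1-\beta)$ on $L^2_0$; the right side, when tested against $\Phi_\rho f$ for $f\in L^2_0(Y)$, contributes a term $\alpha\langle \Phi_\rho f,\Phi_\rho f\rangle$ plus a term bounded by $(1-\alpha)\|\Phi_\rho f\|^2$. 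When $\alpha+\beta>1$ this forces $\Phi_\rho f=0$ on $L^2_0(Y)$, i.e. $\Phi_\rho=\Pi_{X,Y}$, giving $\cT\perp\cS$. For (i), the $\alpha=\beta=1$ case, the operator identity becomes $\Phi_\rho\circ\Pi_Y=\Phi_\rho$ on the relevant spectral subspace, which I would phrase spectrally: the spectral measures of $\cT$ and $\cS$ cannot have a common component, yielding spectral disjointness.

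For part (iii), which is the real content, I would aim to verify the hypothesis of Lemma~\ref{lm:showjp}, namely that any $\lambda\in J^e(\cT,\cS_1\times\cS_2)$ satisfies $\Phi_\lambda(L^2_0(X))\subset L^2_0(Y_1)\oplus L^2_0(Y_2)$; then the conclusion follows immediately. So the plan reduces to showing that $\Phi_\lambda$ maps $L^2_0(X)$ into the sum of the two ``first-order'' subspaces, with no genuine ``mixed'' (product) component $L^2_0(Y_1)\otimes L^2_0(Y_2)$ surviving. To do this I would again use the intertwining relation $\Phi_\lambda\circ T^{t_n}=(S_1^{t_n}\times S_2^{t_n})\circ\Phi_\lambda$ and pass to the weak limit. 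On the source side, since $\cT$ is $(1-\alpha)$-weakly mixing, $T^{t_n}\to (1-\alpha)\Pi_X+\alpha\,\mathrm{Id}$; on the target side, using that $S_1^{t_n}\to(1-\alpha)\Pi_{Y_1}+\alpha K_1$ and $S_2^{t_n}\to(1-\alpha)\Pi_{Y_2}+\alpha K_2$ are each $(1-\alpha)$-partially mixing, the product flow limit decomposes $L^2(Y_1\times Y_2)$ according to the tensor structure. The key algebraic point is that on the mixed subspace $L^2_0(Y_1)\otimes L^2_0(Y_2)$ the limit operator acts as $(\,(1-\alpha)\Pi_{Y_1}+\alpha K_1\,)\otimes(\,(1-\alpha)\Pi_{Y_2}+\alpha K_2\,)$, and the $\Pi$-factors annihilate the zero-mean coordinates, leaving $\alpha^2\,(K_1\otimes K_2)$ — a factor $\alpha^2$ strictly smaller than the factor $\alpha$ coming from the source side.

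Making that comparison precise is where the main obstacle lies. The intended mechanism is a contraction-versus-eigenvalue argument: writing $f\in L^2_0(X)$ and decomposing $\Phi_\lambda f=g_1+g_2+g_{12}$ into its components in $L^2_0(Y_1)$, $L^2_0(Y_2)$, and the mixed part $L^2_0(Y_1)\otimes L^2_0(Y_2)$, the limit identity forces
\begin{equation*}
(1-\alpha)\Phi_\lambda(\Pi_X f)+\alpha\,\Phi_\lambda f=\lim_n (S_1^{t_n}\times S_2^{t_n})\Phi_\lambda f,
\end{equation*}
and since $\Pi_X f=0$, the left side is just $\alpha\,\Phi_\lambda f=\alpha(g_1+g_2+g_{12})$. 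On the right, the mixed component of the limit is $\alpha^2 (K_1\otimes K_2)g_{12}$, whose norm is at most $\alpha^2\|g_{12}\|$. Matching mixed components gives $\alpha\|g_{12}\|\le\alpha^2\|g_{12}\|$, and since $\alpha\in(0,1)$ this forces $g_{12}=0$, exactly the hypothesis of Lemma~\ref{lm:showjp}. The delicate part is justifying that one may take the weak limit inside the tensor-product structure and read off the mixed component cleanly — in particular that the cross terms between the $\Pi$ and $K$ parts land in $L^2_0(Y_1)\oplus L^2_0(Y_2)$ rather than contaminating the mixed subspace, and that the limits $K_1,K_2$ are themselves Markov (hence norm-one) so the estimate $\|K_1\otimes K_2\|\le 1$ is available. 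Once the decomposition of the limit operator with respect to the orthogonal splitting $L^2(Y_1\times Y_2)=\raz\oplus L^2_0(Y_1)\oplus L^2_0(Y_2)\oplus\bigl(L^2_0(Y_1)\otimes L^2_0(Y_2)\bigr)$ is verified, the strict inequality $\alpha^2<\alpha$ does the rest.
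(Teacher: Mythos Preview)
Your treatment of part~(iii) is correct and is essentially the paper's argument: both exploit the intertwining $\Phi_\lambda T^{t_n}=(S_1^{t_n}\times S_2^{t_n})\Phi_\lambda$, pass to the weak limit, and use the discrepancy $\alpha$ versus $\alpha^2$ on the mixed subspace $L^2_0(Y_1)\otimes L^2_0(Y_2)$ to kill the component $g_{12}$, after which Lemma~\ref{lm:showjp} finishes. The paper phrases this via the single scalar quantity $\langle (S_1\times S_2)^{t_n}\Phi f,\Phi f\rangle$ rather than via the full operator identity and orthogonal projection, but the content is the same; your worry about ``cross terms contaminating the mixed subspace'' is unfounded precisely because $K_i$ preserves $L^2_0(Y_i)$.

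The genuine gap is in part~(ii). Your operator identity $(1-\beta)\Phi_\rho K=\alpha\,\Phi_\rho+(1-\alpha)J\Phi_\rho$ on $L^2_0(Y)$ is correct, but testing against $\Phi_\rho f$ gives
\[
(1-\beta)\langle \Phi_\rho Kf,\Phi_\rho f\rangle=\alpha\|\Phi_\rho f\|^2+(1-\alpha)\langle J\Phi_\rho f,\Phi_\rho f\rangle,
\]
and the left-hand side is bounded only by $(1-\beta)\|f\|\,\|\Phi_\rho f\|$, not by $(1-\beta)\|\Phi_\rho f\|^2$: the operator $K$ acts on $f$, not on $\Phi_\rho f$. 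Combined with the lower bound $(2\alpha-1)\|\Phi_\rho f\|^2$ on the right, you obtain at best $\|\Phi_\rho f\|\le\frac{1-\beta}{2\alpha-1}\|f\|$ when $\alpha>\tfrac12$, which need not vanish and is vacuous for, say, $\alpha=0.6$, $\beta=0.5$. The paper does not use norm estimates here at all. It takes an \emph{ergodic} $\Phi\neq\Pi$, reads the same limit relation as an equality of joinings
\[
\beta\cdot\Pi+(1-\beta)\cdot K\Phi=\alpha\cdot\Phi+(1-\alpha)\cdot\Phi J,
\]
and invokes uniqueness of the ergodic decomposition: since $\Pi$ and $\Phi$ are distinct extreme points, the mass $\beta$ carried by $\Pi$ on the left must come entirely from the term $(1-\alpha)\Phi J$ on the right, whence $\beta\le 1-\alpha$, contradicting $\alpha+\beta>1$. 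This convexity/extremality step is the missing idea in your sketch.
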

\begin{proof}
$(i)$ is well-known. 

We will now show that $(ii)$ holds. Suppose that there exists $\Phi\in J^e(\cT,\cS)$ such that $\Pi\neq\Phi$. We have
\begin{equation*}
S^{t_n}\Phi \to (\beta\cdot \Pi_{Y, Y} + (1-\beta)\cdot K)\Phi = \beta\cdot \Pi_{X,Y} + (1-\alpha)\cdot K\Phi.
\end{equation*}
On the other hand
\begin{equation*}
S^{t_n}\Phi = \Phi T^{t_n} \to \Phi (\alpha \cdot Id + (1-\alpha)\cdot J)= \alpha \cdot \Phi + (1-\alpha)\cdot\Phi J.
\end{equation*}
Therefore
\begin{equation}\label{eq:uuu}
\beta\cdot \Pi + (1-\alpha)\cdot K\Phi=\alpha\cdot \Phi + (1-\alpha)\cdot\Phi J.
\end{equation}
We have $\Pi,\Phi \in J^e(\cS,\cT)$. Using~\eqref{eq:uuu} and by the uniqueness of the ergodic decomposition we conclude that in the ergodic decomposition of $(1-\alpha)\cdot\Phi J$ we will see $\beta\cdot \Pi$. Hence $1-\alpha \geq  \beta$, which yields a contradiction. Therefore $J^e(\cS,\cT)=J(\cS,\cT)=\{\Pi\}$.

We will show now that $(iii)$ holds. Take $\lambda\in J^e(\cT,\cS_1\times \cS_2)$. In view of Lemma~\ref{lm:showjp} it suffices to show that for $\Phi=\Phi_{\lambda}$
\begin{equation*}
\Phi(L^2_0(X))\subset (L^2_0(Y_1)\otimes \raz) \oplus (\raz \otimes L^2_0(Y_2)).
\end{equation*}
Take $f\in L^2_0(X)$. Since
\begin{equation*}
L^2_0(Y_1\times Y_1)=(L^2_0(Y_1)\otimes \raz ) \oplus (\raz \otimes L^2_0(Y_2)) \oplus (L^2_0(Y_1)\otimes L^2_0(Y_2)),
\end{equation*}
for some $f_1\in L^2_0(Y_1)$, $f_2 \in L^2_0(Y_2)$ and $f_3 \in L^2_0(Y_1)\otimes L^2_0(Y_2)$ we have
\begin{equation*}
\Phi f= f_1\otimes \raz + \raz \otimes f_2 + f_3.
\end{equation*}
Therefore
\begin{multline*}
\langle (S_1\times S_2)^{t_n}\Phi f, \Phi f \rangle= \langle S_1^{t_n}f_1,f_1 \rangle + \langle S_2^{t_n}f_2,f_2\rangle + \langle (S_1\times S_2)^{t_n}f_3,f_3\rangle\\
\to \alpha \langle K_1f_1,f_1\rangle+ \alpha \langle K_2f_2,f_2\rangle+ \alpha^2 \langle (K_1\otimes K_2)f_3,f_3\rangle.
\end{multline*}
Hence
\begin{equation}\label{eq:mu1}
| \lim_{t_n\to \infty}\langle (S_1\times S_2)^{t_n}\Phi f, \Phi f \rangle| \leq \alpha \|f_1\|^2+\alpha \|f_2\|^2 +\alpha^2 \| f_3\|^2.
\end{equation}
On the other hand
\begin{multline}\label{eq:mu2}
\langle (S_1\times S_2)^{t_n}\Phi f, \Phi f \rangle=\langle \Phi T^{t_n} f,\Phi f \rangle=\langle T^{t_n}f,\Phi^{\ast }\Phi f\rangle\\
\to \alpha \langle f, \Phi^{\ast}\Phi f\rangle=\alpha \|\Phi f\|^2.
\end{multline}
Since $\|\Phi f\|^2=\|f_1\|^2+\|f_2\|^2+\|f_3\|^2$, \eqref{eq:mu1} and~\eqref{eq:mu2} may hold only when $f_3=0$, which completes the proof.
\end{proof}
\subsection{Isomorphism problem and  $\alpha$-weak mixing}
Proposition~\ref{pr:gl} below will complete the proof of Theorem~\ref{tw2}.
\begin{pr}\label{pr:gl}
Let $d\geq 1$ and let $\cT_i$ be $\alpha_i$-weakly mixing for some $\alpha_i\in(0,1)$ for $1\leq i\leq d$ along $t_n\to\infty$. Let $\cS_i$ be weakly mixing for $1\leq i \leq d$. If $\cT_1\times \dots \times \cT_d\simeq \cS_1\times \dots \times \cS_d$ then there exists a permutation $\sigma\in S(d)$ such that $\cT_i \simeq \cS_{\sigma(i)}$. More precisely, if $\Phi \colon Y_1\times \dots \times Y_d \to X_1\times \dots \times X_d$ determines the isomorphism between $\cS_1\times \dots \times \cS_d$ and $\cT_1\times \dots \times \cT_d$ then 
$\Phi|_{L^2(\mathcal{B}_{i})}$ yields an isomorphism between $\mathcal{T}_i$ and $\mathcal{S}_{\sigma(i)}$.
\end{pr}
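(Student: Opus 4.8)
The plan is to argue by induction on $d$, at each stage peeling off the coordinate whose weak-mixing parameter $\alpha_i$ is smallest. Write $U_\Phi\colon L^2(X_1\times\dots\times X_d)\to L^2(Y_1\times\dots\times Y_d)$, $U_\Phi h=h\circ\Phi$, for the unitary induced by the isomorphism; it intertwines the two product Koopman representations. First I would pass to a subsequence of $(t_n)$, using compactness of $\prod_j J(\cS_j)$ in the weak operator topology, so that $S_j^{t_n}\to M_j\in J(\cS_j)$ for every $j$; each $M_j$ then maps $L^2_0(Y_j)$ into itself. The whole argument rests on computing $\lim_n\langle (S_1\times\dots\times S_d)^{t_n}h,h\rangle$ in two different ways and comparing.

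For the bookkeeping I would use the orthogonal decomposition $L^2_0(Y_1\times\dots\times Y_d)=\bigoplus_{\emptyset\neq A}H_A$, where $H_A=\bigotimes_{j\in A}L^2_0(Y_j)$ (tensored with constants off $A$); the product flow preserves each $H_A$, and the analogous decomposition holds on the $X$-side. The first key step is to pin down the structure of the $M_j$. For $g\in L^2_0(Y_j)$, put $f'=U_\Phi^{-1}g$, expand $f'=\sum_A f'_A$, and use $T_i^{t_n}\to\alpha_i\Pi+(1-\alpha_i)\mathrm{Id}$ to obtain $\langle M_j g,g\rangle=\lim_n\langle T^{t_n}f',f'\rangle=\sum_A\prod_{i\in A}(1-\alpha_i)\,\|f'_A\|^2$. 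This number is real, nonnegative, and bounded above by $(1-\alpha_{\min})\|g\|^2$, where $\alpha_{\min}=\min_i\alpha_i$. Hence $M_j|_{L^2_0(Y_j)}$ is a positive self-adjoint contraction of norm $\le 1-\alpha_{\min}$, and therefore for every $A$ the operator $\bigotimes_{j\in A}M_j$ restricted to $H_A$ is positive self-adjoint of norm $\le(1-\alpha_{\min})^{|A|}$.

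Next I would locate the minimal coordinate. Fix $i_0$ with $\alpha_{i_0}=\alpha_{\min}$, take $f\in L^2_0(X_{i_0})$, and set $g=U_\Phi f=\sum_A g_A$. Since $f$ sits purely in the $i_0$-th coordinate, $\langle S^{t_n}g,g\rangle=\langle T^{t_n}f,f\rangle\to(1-\alpha_{\min})\|g\|^2$. On the other hand, by orthogonality of the $H_A$ and the bound above, $\langle S^{t_n}g,g\rangle\to\sum_A\langle(\bigotimes_{j\in A}M_j)g_A,g_A\rangle\le\sum_A(1-\alpha_{\min})^{|A|}\|g_A\|^2\le(1-\alpha_{\min})\sum_A\|g_A\|^2$. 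Both displayed inequalities must therefore be equalities; as $0<1-\alpha_{\min}<1$ gives $(1-\alpha_{\min})^{|A|}<1-\alpha_{\min}$ for $|A|\ge 2$, this forces $g_A=0$ whenever $|A|\ge 2$, i.e. $U_\Phi(L^2_0(X_{i_0}))\subseteq\bigoplus_j L^2_0(Y_j)$. Lemma~\ref{lm:showjp}, in its straightforward $d$-fold form, then localizes the associated ergodic joining, yielding $U_\Phi(L^2(X_{i_0}))\subseteq L^2(Y_{\sigma(i_0)})$ for a single index $\sigma(i_0)$, so that $\Phi$ restricted to the $i_0$-th coordinate is an isomorphism $\cT_{i_0}\simeq\cS_{\sigma(i_0)}$. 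Because $\Phi$ carries the $X_{i_0}$-coordinate $\sigma$-algebra onto the $Y_{\sigma(i_0)}$-coordinate one, it splits off as a product and restricts to an isomorphism $\times_{i\neq i_0}\cT_i\simeq\times_{j\neq\sigma(i_0)}\cS_j$, to which I apply the inductive hypothesis (the same subsequence works, and the relevant minimum is now taken over the remaining indices).

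The main obstacle is the structural input on the limit operators $M_j$: everything hinges on showing that each $M_j$ acts on $L^2_0(Y_j)$ as a positive self-adjoint contraction with the sharp bound $1-\alpha_{\min}$, and that this bound is attained only when $|A|=1$. This is precisely what makes the two-sided estimate rigid enough to annihilate all higher-order components $g_A$ — but the sharpness matches only for a coordinate realizing the minimum, which is exactly why the argument must proceed inductively from the smallest $\alpha_i$ upward instead of treating all coordinates at once. A secondary point requiring care is the passage, via Lemma~\ref{lm:showjp}, from ``$g$ lies in $\bigoplus_j L^2_0(Y_j)$'' to genuine localization in a single coordinate, together with checking that the induced joining is ergodic so that the lemma is applicable.
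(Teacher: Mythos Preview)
Your argument is clean up to and including the localization $U_\Phi(L^2_0(X_{i_0}))\subset L^2_0(Y_{\sigma(i_0)})$: the positivity/self-adjointness of $M_j$ on $L^2_0(Y_j)$ with the sharp bound $1-\alpha_{\min}$ is correctly derived, and the equality-case analysis together with the $d$-fold version of Lemma~\ref{lm:showjp} (this is exactly Remark~\ref{uw:inpart}) does force the image into a single coordinate. The gap is the next sentence. You assert that ``$\Phi$ carries the $X_{i_0}$-coordinate $\sigma$-algebra \emph{onto} the $Y_{\sigma(i_0)}$-coordinate one'' and that therefore $\Phi$ ``splits off as a product''. Neither is justified. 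You have only shown the inclusion $\Phi^{-1}(\mathcal B_{i_0})\subset\mathcal C_{\sigma(i_0)}$, not equality; and even if equality held, it would not by itself force the remaining $X$-coordinates of $\Phi$ to be independent of $y_{\sigma(i_0)}$ (independent complements of an invariant sub-$\sigma$-algebra are not unique, even in the presence of dynamics). Without this splitting you cannot hand an isomorphism $\times_{i\neq i_0}\cT_i\simeq\times_{j\neq\sigma(i_0)}\cS_j$ to the inductive hypothesis, so the induction does not close. Note also that your norm estimate is sharp only for coordinates realizing $\alpha_{\min}$, so you cannot sidestep the problem by treating all $i$ at once.

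This is precisely the difficulty the paper's extra machinery is designed to overcome. Before running the localization argument, the paper first proves (Lemma~\ref{lm:18}, via the ergodic decomposition transported by $\Phi$, then Lemma~\ref{lm:19} and Lemma~\ref{lm:20}) that each $\cS_j$ is itself $\beta_j$-weakly mixing along the same $t_n$, with $\{\beta_j\}=\{\alpha_i\}$ as multisets. Once that symmetry is in hand, the same localization argument applies to $\Phi^{-1}$, and the two one-sided inclusions combine to give $U_\Phi(L^2_0(X_i))=L^2_0(Y_{\sigma(i)})$; this is what makes the inductive peeling legitimate. Your route---working directly with the positive operators $M_j$ without first pinning down the $\beta_j$---is attractive, but to make it go through you would still need some substitute for Lemma~\ref{lm:20} that yields the reverse inclusion at each stage.
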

Before we prove the result stated above, we need some auxiliary lemmas.
\begin{lm}\label{lm:15}
Let $R\in J(\cT)$ where $\cT\simeq \cS$, i.e. $\cS\circ Q = Q\circ \cT$ for some isomorphism $Q\colon X \to Y$. If 
\begin{equation*}
R=\int V\ d P(V)
\end{equation*}
corresponds to the ergodic decomposition of $\lambda_R$ then
\begin{equation*}
Q^{-1}RQ=\int Q^{-1}VQ\ dP(V)=\int V \ d (\Phi^{-1}\circ\cdot \circ\Phi)_{\ast}(P)(V)
\end{equation*}
corresponds to the ergodic decomposition of $\lambda_{Q^{-1}RQ}$.
\end{lm}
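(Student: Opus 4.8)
The plan is to transport the ergodic decomposition of $\lambda_R$ to one of $\lambda_{Q^{-1}RQ}$ by pushing forward along the product isomorphism $Q\times Q\colon X\times X\to Y\times Y$, and then to invoke the uniqueness of the ergodic decomposition. The key observation is that, read on joining measures, conjugation by $Q$ is exactly the pushforward operation $\Psi\colon \lambda\mapsto (Q\times Q)_{\ast}\lambda$, and that this operation is an affine homeomorphism of $J(\cT)$ onto $J(\cS)$ which carries ergodic joinings to ergodic joinings. Once this is established, the identity in the lemma is almost formal.

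First I would check that $\Psi$ maps $J(\cT)$ onto $J(\cS)$. Since $S^t\circ Q=Q\circ T^t$, the map $Q\times Q$ conjugates the diagonal flow $(T^t\times T^t)$ to $(S^t\times S^t)$, so it sends $(T^t\times T^t)$-invariant measures to $(S^t\times S^t)$-invariant ones, and it preserves the marginals because $Q_{\ast}\mu=\nu$. As $Q\times Q$ is a measure-space isomorphism, $\Psi$ is a bijection (with inverse $\lambda\mapsto (Q^{-1}\times Q^{-1})_{\ast}\lambda$), it is clearly affine, and it is continuous for the weak topology on joinings, since testing $(Q\times Q)_{\ast}\lambda$ against a fixed bounded $F$ on $Y\times Y$ amounts to testing $\lambda$ against $F\circ(Q\times Q)$ on $X\times X$. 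Moreover, because $Q\times Q$ is an isomorphism of the underlying systems, it preserves ergodicity, so $\Psi$ restricts to a bijection of $J^e(\cT)$ onto $J^e(\cS)$. Finally, under the operator/measure correspondence one checks directly (with the orientation of $Q$ matching the paper's convention) that $\Psi(\lambda_V)=\lambda_{Q^{-1}VQ}$, so $\Psi$ is precisely the conjugation $V\mapsto Q^{-1}VQ$ seen on joinings.

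With this in hand, I would apply $\Psi$ to the given decomposition $\lambda_R=\int \lambda_V\,dP(V)$ and interchange the pushforward with the integral, obtaining $\lambda_{Q^{-1}RQ}=(Q\times Q)_{\ast}\lambda_R=\int (Q\times Q)_{\ast}\lambda_V\,dP(V)=\int \lambda_{Q^{-1}VQ}\,dP(V)$; the interchange is justified by testing against a bounded measurable $F$ on $Y\times Y$ and applying Fubini to $\int\big(\int F\circ(Q\times Q)\,d\lambda_V\big)\,dP(V)$. Since each $\lambda_{Q^{-1}VQ}$ is ergodic, the right-hand side exhibits $\lambda_{Q^{-1}RQ}$ as the barycenter of the probability measure $\Psi_{\ast}P=(Q^{-1}\circ\cdot\circ Q)_{\ast}P$ supported on $J^e(\cS)$, and by uniqueness of the ergodic decomposition this \emph{is} the ergodic decomposition of $\lambda_{Q^{-1}RQ}$, which is exactly the asserted identity. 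The only genuinely substantive points are that conjugation preserves ergodicity of joinings and that it commutes with the decomposition integral; I expect the latter interchange to be the main thing to state carefully, but it becomes routine once phrased as a pushforward of measures rather than as a barycenter manipulation on operators, and the rest is bookkeeping in translating between the operator form $Q^{-1}VQ$ and the measure form $(Q\times Q)_{\ast}\lambda_V$.
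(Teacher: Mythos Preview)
Your proposal is correct and follows essentially the same approach as the paper: identify conjugation by $Q$ with the pushforward $\lambda\mapsto (Q\times Q)_{\ast}\lambda$, observe that this is an affine isomorphism of the simplices $J(\cT)\to J(\cS)$ (hence preserves extreme points and barycenters), and conclude. The paper's proof is terser---it verifies the identity $\lambda_{Q^{-1}VQ}=\lambda_V\circ(Q\times Q)^{-1}$ by a direct computation and then simply states that $Q\times Q$ induces an affine isomorphism of joining simplices---while you spell out the transport of the integral decomposition and the appeal to uniqueness, but the substance is the same.
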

\begin{proof}
Notice that $\lambda_{Q^{-1}VQ}= \lambda_V \circ (Q \times Q)^{-1}.$
Indeed, for $f,g\in L^2(Y)$ we have
\begin{multline*}
\int f \otimes g \ d \lambda_{Q^{-1}VQ}=\int Q^{-1}VQ f\cdot g \ d \nu = \int VQf\cdot Qg\ d\mu \\
= \int Qf \otimes Qg \ d\lambda_V=\int (Q\otimes Q) (f\otimes g)\ d\lambda_V = \int f\otimes g \ d\lambda_V \circ (Q\otimes Q)^{-1}.
\end{multline*}
It is clear that
\begin{equation*}
Q\times Q \colon X\times X \to Y\times Y
\end{equation*}
yields an affine isomorphism of the simplices of joinings.
\end{proof}
\begin{lm}\label{lm:18}
When the assumptions of Proposition~\ref{pr:gl} are satisfied, $\cS_i$ are $\beta_i$-weakly mixing for some $\beta_i \in (0,1)$ and $1\leq i\leq d$.
\end{lm}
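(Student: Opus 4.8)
The plan is to exploit the fact that \emph{both} $\cT_1\times\dots\times\cT_d$ and $\cS_1\times\dots\times\cS_d$ are product flows, so that the relevant weak limits factorize as tensor products, and then to compare the two resulting ergodic decompositions through Lemma~\ref{lm:15}. First I would record the limit on the $\cT$-side: since each $\cT_i$ is $\alpha_i$-weakly mixing along the common sequence $t_n$, the product satisfies
\[
(T_1\times\dots\times T_d)^{t_n}=\bigotimes_{i=1}^d T_i^{t_n}\longrightarrow M:=\bigotimes_{i=1}^d\bigl(\alpha_i\Pi_i+(1-\alpha_i)Id_i\bigr)
\]
in the weak operator topology. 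Expanding, $M=\sum_{A\subseteq\{1,\dots,d\}}c_A M_A$ with $c_A=\prod_{i\in A}\alpha_i\prod_{i\notin A}(1-\alpha_i)>0$ and $M_A=\bigotimes_{i\in A}\Pi_i\otimes\bigotimes_{i\notin A}Id_i$. The joining $\lambda_{M_A}$ is the product, over the coordinates, of the independent joining $\mu_i\otimes\mu_i$ (for $i\in A$) and the diagonal joining (for $i\notin A$); since each $\cT_i$ is weakly mixing (which $\alpha_i$-weak mixing forces), each piece is weakly mixing, hence so is their product, hence $M_A$ is ergodic. As the $M_A$ are pairwise distinct with positive weights, $M=\sum_A c_A M_A$ is exactly the ergodic decomposition of $M$, with precisely $2^d$ components.

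Next I would transport this to the $\cS$-side. Writing $U$ for the unitary induced by the isomorphism $\Phi$, we have $(S_1\times\dots\times S_d)^{t_n}=U^{-1}(T_1\times\dots\times T_d)^{t_n}U\to U^{-1}MU=:\widetilde M$. Restricting this convergent sequence to each factor $L^2(Y_i)$, which is invariant and on which $(S_1\times\dots\times S_d)^{t_n}$ acts as $S_i^{t_n}$, shows that $S_i^{t_n}\to L_i:=\widetilde M|_{L^2(Y_i)}\in J(\cS_i)$, and consequently $\widetilde M=\bigotimes_{i=1}^d L_i$. On the other hand, by Lemma~\ref{lm:15} the ergodic decomposition of $\widetilde M=U^{-1}MU$ is the $U$-conjugate of that of $M$; in particular $\widetilde M$ has exactly $2^d$ ergodic components, among which are $U^{-1}M_\emptyset U=Id$ and $U^{-1}M_{\{1,\dots,d\}}U=\Pi$, the diagonal and the product self-joinings being preserved by conjugation.

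Finally I would run a counting argument on $\widetilde M=\bigotimes_i L_i$. Because each $\cS_i$ is weakly mixing, every ergodic component of $L_i$ is weakly mixing, and a product of weakly mixing joinings is ergodic; hence the ergodic components of $\bigotimes_i L_i$ are exactly the products $\bigotimes_i V_i$ of ergodic components $V_i$ of the $L_i$, and their number is $\prod_{i=1}^d N_i$, where $N_i$ is the number of ergodic components of $L_i$. Comparing the two descriptions gives $\prod_{i=1}^d N_i=2^d$. Moreover, the appearance of $Id=\bigotimes_i Id_i$ and $\Pi=\bigotimes_i\Pi_i$ among the ergodic components of $\bigotimes_i L_i$ forces, by matching the coordinatewise marginals, both $Id_i$ and $\Pi_i$ to be ergodic components of $L_i$; as $\cS_i$ is nontrivial these are distinct, so $N_i\ge 2$. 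With $d$ factors, each at least $2$, and total product $2^d$, every $N_i=2$, whence $L_i=\beta_i\Pi_i+(1-\beta_i)Id_i$ with $\beta_i\in(0,1)$, i.e.\ $\cS_i$ is $\beta_i$-weakly mixing along $t_n$.

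The main obstacle is the pair of facts that feed the final counting: the \emph{exact} count of $2^d$ ergodic components (which comes from conjugating the ergodic decomposition of $M$ via Lemma~\ref{lm:15}, and relies on verifying that every $M_A$ is genuinely ergodic) must be combined with the \emph{lower bound} $N_i\ge 2$ for each factor (which comes from locating the diagonal and product self-joinings $Id$ and $\Pi$ inside $\bigotimes_i L_i$). Neither the upper control nor the lower control alone suffices; it is their rigidity together, over exactly $d$ coordinates, that pins down each $L_i$ to a two-point combination of $\Pi_i$ and $Id_i$ and thereby yields $\beta_i$-weak mixing of $\cS_i$.
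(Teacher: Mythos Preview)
Your overall strategy coincides with the paper's: transport the ergodic decomposition of
\[
M=\bigotimes_{i=1}^d\bigl(\alpha_i\Pi_i+(1-\alpha_i)Id_i\bigr)=\sum_{A\subset\{1,\dots,d\}}c_A\,M_A
\]
through the isomorphism via Lemma~\ref{lm:15}, observe that $\widetilde{M}=\bigotimes_i L_i$ with $L_i=\lim S_i^{t_n}$, and then compare the two descriptions of $\widetilde{M}$ knowing it has exactly $2^d$ ergodic components, two of which are $Id$ and $\Pi$.

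There is, however, a genuine gap in your final step. You assert that ``because each $\cS_i$ is weakly mixing, every ergodic component of $L_i$ is weakly mixing, and a product of weakly mixing joinings is ergodic''. Weak mixing of $\cS_i$ does \emph{not} guarantee that an arbitrary ergodic self-joining $\lambda_{V_i}$ yields a weakly mixing system $(Y_i\times Y_i,\lambda_{V_i},\cS_i\times\cS_i)$; this system is merely an ergodic extension of the weakly mixing $(Y_i,\nu_i,\cS_i)$, and such extensions may acquire eigenvalues. Consequently you cannot conclude that each $\bigotimes_i\lambda_{V_i}$ is ergodic, nor that the ergodic components of $\bigotimes_i L_i$ are exactly these products, nor even that the $N_i$ are finite.

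The gap is easy to close and your counting survives. For distinct tuples $(V_1,\dots,V_d)\neq(V_1',\dots,V_d')$ the product joinings $\bigotimes_i\lambda_{V_i}$ and $\bigotimes_i\lambda_{V'_i}$ are mutually singular (their $i$-th marginals differ for some $i$), so their ergodic components are disjoint. Hence the $2^d$ ergodic components of $\widetilde{M}$ are distributed among the tuples in the support of $\rho_1\otimes\dots\otimes\rho_d$; in particular that support is finite with at most $2^d$ points, each $\rho_i$ is purely atomic, and $\prod_i N_i\le 2^d$. Together with your lower bound $N_i\ge 2$ (which is correct: projecting $\lambda_{\widetilde M}$ to $Y_i\times Y_i$ exhibits $\Delta_i$ and $\nu_i\otimes\nu_i$ with weights at least $\prod_j(1-\alpha_j)$ and $\prod_j\alpha_j$), this forces $N_i=2$ and $L_i=\beta_i\Pi_i+(1-\beta_i)Id_i$.

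The paper's proof avoids the issue differently: it writes $Q_i=\beta_i\,Id+\gamma_i\,\Pi+\delta_i\,\Phi_{\rho_i}$ with $\rho_i\perp\Delta_i$ and $\rho_i\perp\nu_i\otimes\nu_i$, expands $\bigotimes_i Q_i$ into $3^d$ pairwise singular pieces, and notes that the $2^d$ pieces built only from $Id$ and $\Pi$ \emph{are} ergodic (each factor system is then isomorphic to $\cS_i$ or $\cS_i\times\cS_i$, hence genuinely weakly mixing). Since the ergodic decomposition has exactly $2^d$ terms, the remaining pieces must vanish, i.e.\ $\delta_i=0$. So the paper needs ergodicity only for the very special products of $Id$'s and $\Pi$'s, where it is immediate.
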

\begin{proof}
Let $\Phi \colon Y_1\times \dots \times Y_d \to X_1\times\dots \times X_d$ establish the isomorphism:
\begin{equation*}
(\cT_1\times \dots \times \cT_d) \circ \Phi = \Phi \circ (\cS_1\times \dots \times \cS_d).
\end{equation*}
Then
\begin{multline*}
\Phi \left(\cS_1\times\dots\times\cS_d\right)^{t_n}\Phi^{-1}=\left(\cT_1\times \dots \times \cT_d\right)^{t_n}\\
 \to \otimes_{i=1}^{d}\left(\left(1-\alpha_i\right)\cdot Id + \alpha_i\cdot \Pi_{X_i,X_i}\right),
\end{multline*}
whence
\begin{equation*}
(\cS_1\times\dots\times\cS_d)^{t_n}\to \Phi^{-1}\circ(  \otimes_{i=1}^{d}((1-\alpha_i)\cdot Id + \alpha_i\cdot \Pi))\circ \Phi.
\end{equation*}
We may assume (passing to a subsequence if necessary) that $S_i^{t_n}\to Q_i$ for some $Q_i\in J(\cS_i)$ for $1\leq i \leq d$. Therefore\footnote{ We identify $\lambda_{Q_1\otimes \dots \otimes Q_d}$ with $\lambda_{Q_1}\otimes \dots \otimes \lambda_{Q_d}$.}
\begin{multline}\label{eq:siedem}
\otimes_{i=1}^{d}Q_i \\
=  \Phi^{-1} \circ \left(\sum_{\varepsilon_i\in\{0,1\},1\leq i\leq d}\otimes_{i=1}^{d}\left( \left(1-\alpha_i\right)^{1-\varepsilon_i}\alpha_i^{\varepsilon_i}\cdot Id^{1-\varepsilon_i} \Pi^{\varepsilon_i}\right)\right) \circ \Phi\\
= \sum_{\varepsilon_i\in\{0,1\},1\leq i\leq d}\Phi^{-1} \circ \left(\otimes_{i=1}^{d}\left( \left(1-\alpha_i\right)^{1-\varepsilon_i}\alpha_i^{\varepsilon_i}\cdot Id^{1-\varepsilon_i} \Pi^{\varepsilon_i}\right)\right) \circ \Phi\\
= \sum_{\varepsilon_i\in\{0,1\},1\leq i\leq d} \prod_{i=1}^{d}(1-\alpha_i)^{1-\varepsilon_i}\alpha_i^{\varepsilon_i}\\
 \cdot\left(  \Phi^{-1} \circ (\otimes_{i=1}^{d}( Id^{1-\varepsilon_i} \Pi^{\varepsilon_i})) \circ \Phi\right)
\end{multline}
where
\begin{equation*} Id^{1-\varepsilon}\Pi^{\varepsilon} = \left\{
\begin{array}{rl} Id & \text{if } \varepsilon=0\\
\Pi & \text{if } \varepsilon=1.
\end{array} \right.
\end{equation*}
Since 
\begin{equation*}
\otimes_{i=1}^{d}Id^{1-\varepsilon_i}\Pi^{\varepsilon_i}\in J^e(\cT_1\times \dots \times \cT_d),
\end{equation*}
by Lemma~\ref{lm:15}
\begin{equation*}
 \Phi^{-1} \circ (\otimes_{i=1}^{d}( Id^{1-\varepsilon_i} \Pi^{\varepsilon_i})) \circ \Phi \in J^e(\cS_1\times \dots \times \cS_d).
\end{equation*}
By taking the projection onto the first two coordinates in~\eqref{eq:siedem}, we obtain
\begin{equation*}
Q_1=\prod_{i=1}^{d}(1-\alpha_i)\cdot Id + \prod_{i=1}^{d}\alpha_i\cdot \Pi +R_1,
\end{equation*}
where $R_1$ is the projection of the remaining Markov operator. Hence
\begin{equation*}
Q_1=\beta_1\cdot Id+ \gamma_1 \cdot \Pi + \delta_1\cdot \Phi_{\rho_1},
\end{equation*}
where $\beta_1\geq (1-\alpha_1)\dots (1-\alpha_d)$, $\gamma_1 \geq \alpha_1\dots \alpha_d$, $\delta_i\geq 0$, $\beta_1+\gamma_1+\delta_1=1$ and $\Phi_{\rho_{1}}$ is such that the probability measure $\rho_1$ is singular with respect to both $\Delta_1$ and $\nu_1\otimes \nu_1$.

In the same way
\begin{equation*}
Q_i=\beta_i \cdot Id+ \gamma_i \cdot \Pi+ \delta_i \cdot \Phi_{\rho_i}
\end{equation*}
for some $\beta_i,\gamma_i, \delta_i\geq 0$ satisfying $\beta_i+\gamma_i+\delta_i=1$ and $\rho_i\perp \nu_i\otimes \nu_i$, $\rho_i \perp \Delta_i$. Hence
\begin{equation}\label{eq:osiem}
Q_1\otimes \dots \otimes Q_d=\sum_{\varepsilon_i^{\beta},\varepsilon_i^{\gamma},\varepsilon_i^{\delta}\in \{0,1\}, \varepsilon_i^{\beta}+\varepsilon_i^{\gamma}+\varepsilon_i^{\delta}=1} \otimes_{i=1}^{d} \beta_i^{\varepsilon_i^{\beta}}\gamma_i^{\varepsilon_i^{\gamma}}\delta_i^{\varepsilon_i^{\delta}}\cdot Id^{\varepsilon_i^{\beta} }\Pi^{\varepsilon_i^{\gamma}}\Phi_{\rho_i}^{\varepsilon_i^{\delta}},
\end{equation}
where
\begin{equation*} Id^{\varepsilon_i^{\beta} }\Pi^{\varepsilon_i^{\gamma}}\Phi_{\rho_i}^{\varepsilon_i^{\delta}} = \left\{
\begin{array}{lll}
 Id & \text{if }\;\; (\varepsilon_i^{\beta},\varepsilon_i^{\gamma},\varepsilon_i^{\delta})=(1,0,0),\\
 \Pi & \text{if }\;\; (\varepsilon_i^{\beta},\varepsilon_i^{\gamma},\varepsilon_i^{\delta})=(0,1,0),\\
\Phi_{\rho_i} & \text{if }\;\; (\varepsilon_i^{\beta},\varepsilon_i^{\gamma},\varepsilon_i^{\delta})=(0,0,1).
\end{array} \right.
\end{equation*}
Both~\eqref{eq:siedem} and~\eqref{eq:osiem} are some decompositions of operator $Q_1\otimes \dots\otimes Q_d$ and~\eqref{eq:siedem} is the ergodic decomposition. In the decomposition~\eqref{eq:siedem} there are $2^d$ summands. Notice that in the decomposition~\eqref{eq:osiem} there are at least $2^d$ operators with non-zero coefficients corresponding to ergodic measures, namely
\begin{equation*}
\otimes_{i=1}^{d}\beta_i^{\varepsilon_i^{\beta}}\gamma_i^{\varepsilon_i^{\gamma}}\delta_i^{\varepsilon_i^{\delta}}\cdot Id^{\varepsilon_i^{\beta}}\Pi^{\varepsilon_i^{\gamma}}\Phi_{\rho_i}^{\varepsilon_i^{\delta}}
\end{equation*}
such that $\varepsilon_i^{\delta}=0$. Each of the remaining operators is of the form
\begin{equation*}
\otimes_{i=1}^{d}\beta_i^{\varepsilon_i^{\beta}}\gamma_i^{\varepsilon_i^{\gamma}}\delta_i^{\varepsilon_i^{\delta}}\cdot Id^{\varepsilon_i^{\beta}}\Pi^{\varepsilon_i^{\gamma}}\Phi_{\rho_i}^{\varepsilon_i^{\delta}},
\end{equation*}
where for some $1\leq i_0\leq d$ we have $\varepsilon_{i_0}^{\delta}=1$. Each of them is a convex combination of the $2^d$ operators corresponding to ergodic measures. This is however impossible since all the measure in the decomposition~\eqref{eq:osiem} are mutually singular, whence none of the operators is a convex combination of the remaining ones. Hence $\delta_i=0$ for $1\leq i \leq d$ and the proof is complete.
\end{proof}
\begin{lm}\label{lm:19}
Let $a_1,\dots,a_d,b_1,\dots,b_d\in(0,1)$. If the multisets (i.e. sets with elements of a multiplicity possibly greater than one)
\begin{equation*}
\left\{ a_{i_1}\cdot \ldots \cdot a_{i_k}\colon i_1< \dots <i_k,1\leq k\leq d \right\}
\end{equation*}
and
\begin{equation*}
\left\{ b_{i_1}\cdot \ldots \cdot b_{i_k}\colon i_1< \dots <i_k,1\leq k\leq d \right\}
\end{equation*}
are equal then also the multisets $\{a_1,\dots,a_d\}$ and $\{b_1,\dots,b_d\}$ are equal.
\end{lm}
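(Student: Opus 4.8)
The plan is to convert the multiplicative, subset-product data into a single clean analytic identity and then recover the individual factors by an asymptotic ``peeling'' argument. First I would observe that equality of the two multisets of subset-products yields equality of the sums $\sum_{\emptyset\neq I}\left(\prod_{i\in I}a_i\right)^t$ and $\sum_{\emptyset\neq I}\left(\prod_{j\in I}b_j\right)^t$ for every real exponent $t>0$ (summing the fixed function $x\mapsto x^t$ over two equal multisets gives the same value). Since $\sum_{\emptyset\neq I}\prod_{i\in I}a_i^t=\prod_{i=1}^d(1+a_i^t)-1$, this immediately gives the key identity
\begin{equation*}
\prod_{i=1}^d(1+a_i^t)=\prod_{j=1}^d(1+b_j^t),\qquad t>0.\tag{$\ast$}
\end{equation*}
So the whole lemma reduces to showing that $(\ast)$ forces $\{a_1,\dots,a_d\}=\{b_1,\dots,b_d\}$ as multisets.

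Next I would prove this reduced statement by induction on $d$, peeling off the largest element. Let $A=\max_i a_i$ with multiplicity $m_A$ and $B=\max_j b_j$ with multiplicity $m_B$. Taking logarithms in $(\ast)$ and using $\log(1+x)=x+O(x^2)$ together with $a_i,b_j\in(0,1)$, one gets as $t\to\infty$
\begin{equation*}
\sum_{i=1}^d\log(1+a_i^t)=m_A A^t(1+o(1)),\qquad \sum_{j=1}^d\log(1+b_j^t)=m_B B^t(1+o(1)),
\end{equation*}
because every term with base strictly smaller than $A$ (resp.\ $B$) is of order $o(A^t)$ (resp.\ $o(B^t)$) and the quadratic correction is $O(A^{2t})=o(A^t)$. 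Comparing the two asymptotics and applying $\tfrac1t\log$ forces $A=B$, after which the prefactors force $m_A=m_B$.

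Having matched the top value and its multiplicity, I would cancel the common (nonzero) factor $(1+A^t)^{m_A}=(1+B^t)^{m_B}$ from both sides of $(\ast)$, obtaining the same kind of identity for the strictly smaller elements; the lists retain equal length $d-m_A$, so the inductive hypothesis finishes the argument, the base case $d=0$ being vacuous. The main obstacle I anticipate is the asymptotic bookkeeping in the peeling step: one must argue carefully that only the maximal base contributes to the leading order, so that both the value $A$ and the multiplicity $m_A$ are read off correctly, and that cancelling the top factor leaves exactly the reduced instance of $(\ast)$ needed for the induction. The reduction to $(\ast)$ itself is essentially formal, and once $(\ast)$ holds for all real $t$ the comparison as $t\to\infty$ is robust precisely because all bases lie in $(0,1)$ — which is exactly where the hypothesis $a_i,b_j\in(0,1)$ enters.
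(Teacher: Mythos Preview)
Your proof is correct, but it takes a genuinely different route from the paper's.

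The paper's argument is purely combinatorial and much shorter: since all $a_i\in(0,1)$, any product of two or more of them is strictly smaller than each factor, so the largest element of the multiset $M$ of subset-products is necessarily the largest singleton $a_{j_1}$. One then removes from $M$ every product that uses only indices already identified and observes that the maximum of what remains is the next-largest $a_{j_{s+1}}$ (any surviving product contains at least one ``new'' factor, which is $\le a_{j_{s+1}}$, and is multiplied by numbers $<1$). This recovers the multiset $\{a_1,\dots,a_d\}$ from $M$ alone, hence equal $M$'s give equal multisets.

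Your approach converts the hypothesis into the analytic identity $\prod_i(1+a_i^t)=\prod_j(1+b_j^t)$ for all $t>0$ and peels off the largest base via $t\to\infty$ asymptotics. This is a perfectly valid alternative: the reduction to $(\ast)$ is clean, and the asymptotic step (only the maximal base contributes at leading order, the $O(A^{2t})$ correction is $o(A^t)$ because $A<1$) is carried out correctly, including the recovery of both the value and the multiplicity. The induction after cancelling $(1+A^t)^{m_A}$ is fine since $(\ast)$ holds for \emph{all} $t$, not just integers.

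What each approach buys: the paper's argument is elementary and avoids analysis entirely---it never needs the parameter $t$ or any limiting procedure; it reads the answer directly off the combinatorics of products in $(0,1)$. Your argument packages the hypothesis into a single generating-function identity, which is arguably more conceptual and would generalize more readily (for example, it shows that the finite Dirichlet series $\sum a_i^t$ already determines the multiset, since one can run the same asymptotic peeling on $\sum a_i^t=\sum b_j^t$ directly without ever passing through the product form). Both proofs share the same underlying idea---isolate the largest element first---but implement it with different tools.
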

\begin{proof}
We will show how to determine $a_1,\dots,a_d$ knowing the multiset
\begin{equation*}
M=\left\{ a_{i_1}\cdot \ldots \cdot a_{i_k}\colon i_1< \dots <i_k,1\leq k\leq d \right\}.
\end{equation*}
Notice that the largest number in $M$ is equal to $a_{j_1}$ for some $1\leq j_1\leq d$. Assume that we have found $a_{j_1},\dots,a_{j_s}$ such that 
\begin{equation*}
a_i \leq a_{j_s}\text{ for }i\notin \{j_1,\dots,j_s\}.
\end{equation*}
We will show how to find $a_{j_{s+1}}$ such that
\begin{equation*}
a_i \leq a_{j_{s+1}}\text{ for }i\notin \{j_1,\dots,j_{s+1}\}.
\end{equation*}
Consider the multiset
\begin{equation*}
M'=M\setminus \{a_{i_1}\cdot\ldots\cdot a_{i_k}\colon  i_1<\dots <i_k,\ 1\leq k\leq d,\ i_d\in \{j_1,\dots,j_s\}\}.
\end{equation*}
We claim that the largest number in $M'$ is equal to $a_{j_{s+1}}$ for some $1\leq j_{s+1}\leq d$. Indeed, any other number in $M'$ is a product of numbers between $0$ and $1$ with at least one factor being an element of the set $\{a_i\colon i\notin \{j_1,\dots,j_s\}\}$. By induction the proof is complete.
\end{proof}
\begin{uw}\label{uw:20}
Fix $t_n\to \infty$. Then for each $\alpha \in (0,1)$ the set $\{f\in L^2\colon f\circ T^{t_n}\to \alpha \int f+(1-\alpha)f \text{ weakly}\}$ is a closed subspace, hence the set of $\alpha$'s for which this subspace is non-empty is an isomorphism invariant.
\end{uw}
\begin{lm}\label{lm:20}
When the assumptions of Proposition~\ref{pr:gl} are satisfied, $\cS_i$ are $\alpha_{\sigma(i)}$-weakly mixing for some permutation $\sigma\in S(d)$.
\end{lm}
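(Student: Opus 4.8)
The plan is to push the computation of Lemma~\ref{lm:18} one step further and to read the numbers $\alpha_i$ off the \emph{weights} of an ergodic decomposition. By Lemma~\ref{lm:18} we have $S_i^{t_n}\to Q_i=(1-\gamma_i)\,Id+\gamma_i\,\Pi$ with $\gamma_i\in(0,1)$, so that $(\cS_1\times\dots\times\cS_d)^{t_n}\to\bigotimes_{i=1}^d Q_i$. Expanding the tensor product, $\bigotimes_i Q_i=\sum_{S\subseteq\{1,\dots,d\}}d_S\,N_S$, where $N_S=\bigotimes_{i\in S}\Pi\otimes\bigotimes_{i\notin S}Id$ and $d_S=\prod_{i\in S}\gamma_i\prod_{i\notin S}(1-\gamma_i)$; this is the ergodic decomposition of $\bigotimes_i Q_i$ into self-joinings of $\cS_1\times\dots\times\cS_d$. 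On the other hand, conjugating the analogous decomposition of $\bigotimes_i((1-\alpha_i)Id+\alpha_i\Pi)$ by the isomorphism $\Phi$ and using Lemma~\ref{lm:15} (so that $\bigotimes_i Q_i=\Phi^{-1}(\bigotimes_i((1-\alpha_i)Id+\alpha_i\Pi))\Phi$), I get a second ergodic decomposition $\bigotimes_i Q_i=\sum_S c_S\,M_S$ with $M_S=\Phi^{-1}N_S^{X}\Phi$ and $c_S=\prod_{i\in S}\alpha_i\prod_{i\notin S}(1-\alpha_i)$.

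First I would note that the $N_S$ are pairwise distinct — on the $i$-th pair of coordinates $N_S$ joins $\cS_i$ diagonally or independently, and these differ because $\cS_i$ is nontrivial and ergodic — and likewise the $M_S$, since conjugation by $\Phi$ is injective. By uniqueness of the ergodic decomposition the two atomic measures $\sum_S c_S\delta_{M_S}$ and $\sum_S d_S\delta_{N_S}$ coincide; comparing supports gives $\{M_S\}=\{N_S\}$ and a weight-preserving bijection $\pi$ with $M_S=N_{\pi(S)}$ and $c_S=d_{\pi(S)}$. In particular the multisets $\{c_S\}$ and $\{d_S\}$ are equal.

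The main obstacle is that the equality $\{c_S\}=\{d_S\}$ does \emph{not} by itself recover $\{\alpha_i\}$: replacing a single $\alpha_i$ by $1-\alpha_i$ only permutes the $c_S$ (it swaps $c_S$ and $c_{S\triangle\{i\}}$), so the weights cannot tell the partial-rigidity coefficient $1-\alpha_i$ from the partial-mixing coefficient $\alpha_i$. To break this symmetry I would single out the full diagonal $\Delta=N_\emptyset$: it is the only $N_S$, and the only $M_S$ (namely $M_\emptyset=\Phi^{-1}Id\,\Phi$), equal to the identity self-joining, so the matched masses at this common atom give $\prod_i(1-\alpha_i)=\prod_i(1-\gamma_i)$. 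Alternatively this orientation is provided by Remark~\ref{uw:20}: the point spectrum of the weak limit on $L^2_0$ is the invariant set $\{\prod_{i\in S}(1-\alpha_i):S\neq\emptyset\}$, manifestly built from the $1-\alpha_i$ rather than the $\alpha_i$.

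Finally, dividing the equal multisets $\{c_S\}=\{d_S\}$ by the common value $c_\emptyset=\prod_i(1-\alpha_i)=\prod_i(1-\gamma_i)=d_\emptyset$ turns them into $\{\prod_{i\in S}\tfrac{\alpha_i}{1-\alpha_i}\}=\{\prod_{i\in S}\tfrac{\gamma_i}{1-\gamma_i}\}$ over the nonempty $S$. Lemma~\ref{lm:19}, applied to the positive numbers $\alpha_i/(1-\alpha_i)$ and $\gamma_i/(1-\gamma_i)$ (its peeling argument works for any positive reals once one orders the factors, or one passes to logarithms and reconstructs a multiset from its subset sums), then gives $\{\alpha_i/(1-\alpha_i)\}=\{\gamma_i/(1-\gamma_i)\}$, and since $x\mapsto x/(1-x)$ is injective on $(0,1)$ we obtain $\{\alpha_i\}=\{\gamma_i\}$ — that is, $\cS_i$ is $\alpha_{\sigma(i)}$-weakly mixing for a suitable permutation $\sigma$. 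As an alternative to this last combinatorial step, one can keep the components: $M_S=N_{\pi(S)}$ forces $\Phi$ to send the subspace of functions depending only on the coordinates outside $\pi(S)$ onto the one depending only on the coordinates outside $S$, and since $\Phi$ preserves inclusions of these coordinate subspaces, $\pi$ is induced by a permutation $\sigma$ of $\{1,\dots,d\}$; comparing singleton weights then yields $\alpha_j=\gamma_{\sigma(j)}$ directly and simultaneously establishes the finer claim of Proposition~\ref{pr:gl}.
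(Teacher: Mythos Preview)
Your main line has a real gap at the final combinatorial step. The assertion that the peeling argument of Lemma~\ref{lm:19} ``works for any positive reals'' (equivalently, that a multiset of reals can be reconstructed from its multiset of subset sums) is false. Take $d=3$ and $x_1=e$, $x_2=e^{2}$, $x_3=e^{-3}$ versus $y_i=1/x_i$: both triples have the same multiset of nonempty subset products
\[
\{e^{-3},e^{-2},e^{-1},1,e,e^{2},e^{3}\},
\]
but $\{x_i\}\neq\{y_i\}$. Translating back via $\alpha_i=x_i/(1+x_i)$ and $\gamma_i=y_i/(1+y_i)=1-\alpha_i$, one obtains $\alpha_i\in(0,1)$ with $\prod_i\alpha_i=\prod_i(1-\alpha_i)$, so that your diagonal identification $c_\emptyset=d_\emptyset$ is automatically satisfied, the multisets $\{c_S\}$ and $\{d_S\}$ coincide (via $S\leftrightarrow S^c$), and yet $\{\alpha_i\}\neq\{\gamma_i\}$. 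Thus the weight data you extract cannot by itself force the conclusion. The proof of Lemma~\ref{lm:19} genuinely uses that the factors lie in $(0,1)$, so that a single factor dominates every longer product; once some $x_i>1$ and some $x_j<1$ this fails, and no amount of ``ordering the factors'' repairs it.

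The paper bypasses this difficulty by working directly with Remark~\ref{uw:20} on the tensor decomposition of $L^2_0$: the isomorphism invariant is the collection $\{\prod_{i\in S}(1-\alpha_i):\emptyset\neq S\subseteq\{1,\dots,d\}\}$, and since each $1-\alpha_i\in(0,1)$, Lemma~\ref{lm:19} applies verbatim. You do mention Remark~\ref{uw:20}, but only as a device to orient $\alpha_i$ versus $1-\alpha_i$, not as the full argument. Your last alternative---reading off from $M_S=N_{\pi(S)}$ that $\Phi$ carries coordinate sub-$\sigma$-algebras to coordinate sub-$\sigma$-algebras, so that $\pi$ is induced by a permutation $\sigma$ of $\{1,\dots,d\}$---is correct and in fact already establishes Proposition~\ref{pr:gl}; but once you invoke it, the whole weight computation becomes superfluous.
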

\begin{proof}
We have
\begin{equation}\label{eq:lm:20:1}
L^2_0(X_1\times \dots \times X_d)= \bigoplus_{1\leq k\leq d} \bigoplus_{i_1<\dots <i_k}L_{i_1,\dots,i_k}^X
\end{equation}
and
\begin{equation}\label{eq:lm:20:2}
L^2_0(Y_1\times \dots \times Y_d)= \bigoplus_{1\leq k\leq d} \bigoplus_{i_1<\dots <i_k}L_{i_1,\dots,i_k}^Y,
\end{equation}
where
\begin{equation*}
L_{i_1,\dots,i_k}^X= \bigotimes_{j=1}^{k} L_0^2(X_{i_j}) \text{ and }L_{i_1,\dots,i_k}^Y= \bigotimes_{j=1}^{k} L_0^2(Y_{i_j}).
\end{equation*}
Notice that $(\cT_1\times \dots \times \cT_d)|_{L^X_{i_1,\dots,i_k}}$ is $1-(1-\alpha_{i_1})\cdot \ldots \cdot (1-\alpha_{i_k})$-weakly mixing and $(\cS_1\times \dots \times \cS_d)|_{L^Y_{i_1,\dots,i_k}}$ is $1-(1-\beta_{i_1})\cdot \ldots \cdot (1-\beta_{i_k})$-weakly mixing. Since $\Phi$ is an isomorphism, for every $f\in L^2_0(X_1\times\dots\times X_d)$ such that $(T_1^{t_n}\times \dots \times T_d^{t_n})f\to \alpha\cdot f$ weakly, we have $(S_1^{t_n}\times \dots \times S_d^{t_n})\Phi f \to \alpha\cdot \Phi f$ weakly. Hence the multisets
\begin{equation*}
\left\{ (1-\alpha_{i_1})\cdot \dots \cdot (1-\alpha_{i_k})\colon i_1< \dots <i_k,1\leq k\leq d \right\}
\end{equation*}
and
\begin{equation*}
\left\{ (1-\beta_{i_1})\cdot \dots \cdot (1-\beta_{i_k})\colon i_1< \dots <i_k,1\leq k\leq d \right\}
\end{equation*}
are equal and by Lemma~\ref{lm:19}, the proof is complete.
\end{proof}
\begin{proof}[Proof of Proposition~\ref{pr:gl}]
Without loss of generality we may assume that $\alpha_1\geq \dots \geq \alpha_d$. 

Let $1\leq i_1\leq d$ be the largest number such that $\alpha_1=\dots =\alpha_{i_1}$. By Remark~\ref{uw:20}, we have
\begin{equation*}
\Phi(L^2_0(X_i))\subset L_0^2(Y_{\sigma_1(1)})\oplus\dots \oplus L^2_0(Y_{\sigma_1(i_1)})
\end{equation*}
for $1\leq i\leq i_1$, where $\sigma_1 \colon \{1,\dots, i_1\}\to \{1,\dots, d\}$ is an injection such that $\cS_{\sigma_1(i)}$ is $\alpha_1$-weakly mixing for $1\leq i\leq i_1$. Notice that
\begin{equation*}
\Phi \in J^e(\cT_i,\cS_{\sigma_1(1)}\times \dots \times \cS_{\sigma_1(i_1)}).
\end{equation*}
By Remark~\ref{uw:inpart} we obtain that
$
\Phi(L^2_0(X_i))\subset L_0^2(Y_{\sigma(i)})
$
for a unique $\sigma(i)\in \{\sigma_1(1),\dots ,\sigma_1(i_1)\}$. Since $\Phi$ is an isomorphism $$\Phi(L^2_0(X_i))= L_0^2(Y_{\sigma(i)})$$ for $1\leq i\leq i_1$.

If $i_1=d$, the proof is complete. If $i_1<d$ let $1\leq i_2\leq d$ be the largest number such that $\alpha_{i_1+1}=\dots =\alpha_{i_2}$. Notice that the only $L^2_0$-subspaces of $L^2_0(Y_1\times\dots \times Y_d)$ on which we will see $\alpha_{i_2}$-weak mixing and which are not in the $\Phi$-image of $L^2_0(X_1\times\dots \times X_{i_1})$ are $L^2_0(Y_{\sigma_2(i)})$ for $1\leq\sigma_2(i)\leq d$ such that $\cS_{\sigma_2(i)}$ is $\alpha_{i_2}$-weakly mixing. As in the first part of the proof 
$
\Phi(L^2_0(X_i))=L^2_0(Y_{\sigma(i)})
$
for $i_1+1\leq i\leq i_2$, where $\sigma(i)$ is unique and such that $\cS_{\sigma(i)}$ is $\alpha_{i_2}$-weakly mixing.

In finitely many steps we obtain a permutation $\sigma$ of $\{1,\dots ,d\}$ and we complete the proof.
\end{proof}
As a direct consequence of Proposition~\ref{pr:gl} we obtain the following corollary.

\begin{wn}\label{co:ce2}
Let $d\geq 1$ and let $\cT_i$ be weakly mixing flows which are $\alpha_i$-weakly mixing for some $\alpha_i\in(0,1)$ and $1\leq i\leq d$. Then the centralizer $C(\cT_1\times \dots \times \cT_d)$ of $\cT_1\times \dots \times \cT_d$ consists of transformations belonging to
\begin{equation*}
C(\cT_{\sigma(1)})\times \dots \times C(\cT_{\sigma(d)}),
\end{equation*}
where the permutation $\sigma\in S(d)$ is such that $\sigma(i)=j$ implies $\cT_i\simeq \cT_j$.
\end{wn}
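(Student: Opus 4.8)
The plan is to reduce everything to Proposition~\ref{pr:gl} by recognizing that an element of the centralizer is nothing but a self-isomorphism of the product flow. Recall that $R\in C(\cT_1\times\dots\times\cT_d)$ means precisely that $R$ is an invertible measure-preserving transformation of $X_1\times\dots\times X_d$ commuting with every $(T_1\times\dots\times T_d)^t$; equivalently, $R$ realizes an isomorphism of $\cT_1\times\dots\times\cT_d$ onto itself. This is exactly the situation covered by Proposition~\ref{pr:gl} with the choices $\cS_i:=\cT_i$ and $Y_i:=X_i$, since each $\cT_i$ is $\alpha_i$-weakly mixing and hence weakly mixing.

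First I would apply Proposition~\ref{pr:gl} to the self-isomorphism $\Phi:=R$. It furnishes a permutation $\sigma\in S(d)$ for which the associated Markov operator satisfies $R(L^2_0(X_i))=L^2_0(X_{\sigma(i)})$, and for which $R|_{L^2_0(X_i)}$ is an isomorphism between $\cT_i$ and $\cT_{\sigma(i)}$. In particular $\cT_i\simeq\cT_{\sigma(i)}$ for every $i$, which is exactly the condition that $\sigma(i)=j$ imply $\cT_i\simeq\cT_j$ demanded in the statement.

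Next I would read off the coordinate structure of $R$ from these equalities. The relations $R(L^2_0(X_i))=L^2_0(X_{\sigma(i)})$ say that $R$ carries the $i$-th coordinate factor-$\sigma$-algebra $\mathcal{B}_i$ onto the $\sigma(i)$-th one, and that on each such factor $R$ acts as an isomorphism of $\cT_i$ onto $\cT_{\sigma(i)}$. Fixing these factorwise isomorphisms then identifies $R$ with the composition of the coordinate permutation induced by $\sigma$ and a diagonal element; this is precisely the assertion that $R$ belongs to $C(\cT_{\sigma(1)})\times\dots\times C(\cT_{\sigma(d)})$.

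The substantive mathematical content is already packaged in Proposition~\ref{pr:gl}, so no genuine difficulty remains; the corollary is indeed direct. The only point requiring a little care is the concluding bookkeeping step: one must note that ``permuting coordinates according to $\sigma$'' is itself an admissible element of the centralizer, which holds exactly because $\sigma$ sends each factor to an isomorphic one, and that the resulting decomposition matches the product-of-centralizers notation. This is the same verification already carried out in the JP case in Corollary~\ref{co:ce1}, and it transfers verbatim.
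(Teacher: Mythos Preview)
Your proposal is correct and follows exactly the approach indicated by the paper, which states the corollary as a direct consequence of Proposition~\ref{pr:gl} without further argument. You have simply spelled out the one-line deduction: apply Proposition~\ref{pr:gl} with $\cS_i=\cT_i$ to a self-isomorphism $R$, read off that $R$ sends each coordinate $\sigma$-algebra onto another, and translate this into the product-of-centralizers description.
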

It is not clear whether it is possible to obtain a complete counterpart of Proposition~\ref{pr:inftyJP} for $\alpha$-weakly mixing flows. We leave the following question open. 
\begin{equation*}
\begin{array}{l} 
\mbox{Does for $\alpha_i$-weakly mixing $\mathcal{T}_i$ the isomorphism}\\
\mbox{$\mathcal{T}_1\times \mathcal{T}_2\times \dots \simeq \mathcal{S}_1\times \mathcal{S}_2\times \dots$}\\
\mbox{imply that $S_i$ are $\beta_i$-weakly mixing for some $\beta_i$?}
\end{array}
\end{equation*}
Using the methods which proved useful in the finite case, one can prove however some infinite version of Proposition~\ref{pr:gl}. Before we formulate it, let us define the following property.
\begin{df}
We say that the set $\{a_i\colon i\in\N\}$ fulfills condition $WO$ when it is is well-ordered, i.e. when every subset of $\{a_i\colon i\in\N\}$ has the least element.
\end{df}
\begin{uw}
Note that $\{a_i\colon i\in\N\}$ fulfills condition WO if and only if there are no infinite decreasing subsequences in $\{a_i\colon i\in\N\}$.
\end{uw}
\begin{pr}
Let $\mathcal{T}_i$ be $\alpha_i$-weakly mixing and $\mathcal{S}_i$ be $\beta_i$-weakly mixing along $t_n\to\infty$. Assume that $\{\alpha_i\colon i\in \N\}$ fulfills condition WO. If $\mathcal{T}_1\times \mathcal{T}_2\times \dots \simeq \mathcal{S}_1\times \mathcal{S}_2\times \dots$ via $\Phi$ then $\Phi$ determines an isomorphism between $\mathcal{T}_i$ for and $\mathcal{S}_{\sigma(i)}$ for some permutation $\sigma \colon \N\to \N$.
\end{pr}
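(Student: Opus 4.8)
The plan is to mirror the proof of Proposition~\ref{pr:gl}, replacing the (reverse) induction over the finitely many parameter groups by a transfinite induction driven by condition $WO$, and to use Lemma~\ref{lm:showjp} to descend from ``same parameter'' to ``single coordinate''. Write $L^X_{i_1,\dots,i_k}=\bigotimes_{l}L^2_0(X_{i_l})$ and $L^Y_{j_1,\dots,j_k}=\bigotimes_l L^2_0(Y_{j_l})$, so that $L^2_0(X_1\times X_2\times\dots)=\bigoplus L^X_{i_1,\dots,i_k}$ and similarly for $Y$; for $c\in(0,1)$ let $W^X_c=\{f\in L^2_0(X_1\times\dots):T^{t_n}f\to c\cdot f \text{ weakly}\}$ be the $(1-c)$-weakly mixing subspace, and likewise $W^Y_c$. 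By Remark~\ref{uw:20} one has $\Phi(W^X_c)=W^Y_c$, and $W^X_{1-v}$ is the closed span of those $L^X_{i_1,\dots,i_k}$ with $(1-\alpha_{i_1})\cdots(1-\alpha_{i_k})=1-v$. The single observation that makes $WO$ relevant is that a product of $k\ge 2$ numbers from $(0,1)$ is strictly smaller than each factor; hence every such piece with $k\ge 2$ forces $\alpha_{i_l}<v$ for all $l$ (and likewise $\beta_{j_l}<v$ on the $Y$-side).

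I would prove by transfinite induction on $v$, ranging over the well-ordered set $\{\alpha_i\}$, the statement that $\Phi$ restricts to a value-preserving bijection between $\{m:\alpha_m<v\}$ and $\{j:\beta_j<v\}$, with $\Phi(L^2_0(X_m))=L^2_0(Y_{\sigma(m)})$ and $\beta_{\sigma(m)}=\alpha_m$. Well-foundedness of $WO$ (no infinite descending chain) is exactly what legitimises such an induction and compensates for the possible absence of a largest parameter. Fix $v$ and assume the claim below $v$. Since $\Phi$ comes from a point isomorphism, it carries coordinate factors to coordinate factors and tensor products to tensor products, so by the inductive hypothesis it maps the span of the length-$\ge 2$ pieces of $W^X_{1-v}$ (which, by the observation above, involve only coordinates with parameter $<v$, all already matched) exactly onto the span of the length-$\ge 2$ pieces of $W^Y_{1-v}$.

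Now take $i$ with $\alpha_i=v$. By Remark~\ref{uw:20}, $\Phi(L^2_0(X_i))\subset W^Y_{1-v}$. Since $L^2_0(X_i)$ is orthogonal to every product piece of $W^X_{1-v}$ (the index $i$ does not occur there, as $\alpha_i=v\not<v$) and $\Phi$ is isometric, $\Phi(L^2_0(X_i))$ is orthogonal to the product part of $W^Y_{1-v}$; hence
\[
\Phi(L^2_0(X_i))\subset\overline{\bigoplus\nolimits_{\beta_j=v}L^2_0(Y_j)}.
\]
To pass from this first-chaos inclusion over the (possibly infinitely many) coordinates with $\beta_j=v$ to a single coordinate, I would apply Lemma~\ref{lm:showjp} to pairs: for any $j_1\neq j_2$ the projection of the ergodic joining $\Phi|_{L^2(X_i)}$ onto $X_i\times Y_{j_1}\times Y_{j_2}$ is again ergodic and, by the displayed inclusion, has its image inside $L^2_0(Y_{j_1})\oplus L^2_0(Y_{j_2})$; Lemma~\ref{lm:showjp} then forces it through a single coordinate, so at most one of the two components is non-zero. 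As this holds for every pair, exactly one survives, giving $\Phi(L^2_0(X_i))=L^2_0(Y_{\sigma(i)})$ with $\beta_{\sigma(i)}=v$. Running the symmetric argument with $\Phi^{-1}$ on each $Y$-coordinate with $\beta_j=v$ upgrades this to a bijection between $\{m:\alpha_m=v\}$ and $\{j:\beta_j=v\}$, completing the inductive step; collecting over all $v$ and using surjectivity of $\Phi$ yields the permutation $\sigma$ of $\N$.

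The main obstacle is bookkeeping the hypothesis that below $v$ the matching is a \emph{complete} bijection on both sides — in particular that every $\beta$-value occurring below $v$ is already an $\alpha$-value — since only then is the product part of $W^Y_{1-v}$ fully accounted for by $\Phi$ of the product part of $W^X_{1-v}$. This is why the induction must be run symmetrically in $X$ and $Y$ at each stage (via $\Phi$ and $\Phi^{-1}$) and why surjectivity of $\Phi$ is invoked to conclude $\{\beta_j\}=\{\alpha_i\}$ as multisets. Once this symmetry is in place, the only genuinely new ingredient beyond the finite case of Proposition~\ref{pr:gl} is the pairwise use of Lemma~\ref{lm:showjp}; notably it removes any need to control the partial-mixing parameter of products or of infinite tails (whose $\Pi$-coefficient degenerates), so the Kolmogorov-type argument of Proposition~\ref{pr:inftyJP} is not needed here. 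Care must also be taken at limit stages of the transfinite induction, where the inductive hypothesis is used as the union of all earlier stages.
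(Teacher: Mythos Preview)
Your approach is essentially the same as the paper's: both run a transfinite induction upward through the well-ordered set $\{\alpha_i\}$, use the invariance of the $(1-v)$-weakly mixing subspaces under $\Phi$ (Remark~\ref{uw:20}), exploit that length-$\ge 2$ pieces at level $v$ involve only coordinates with parameter strictly below $v$, and then reduce from the first-chaos inclusion to a single coordinate. The bookkeeping you flag (that every $\beta_j$ below the current level is already accounted for, so the product part of $W^Y_{1-v}$ is exactly $\Phi$ of the product part of $W^X_{1-v}$) is precisely what the paper handles at each stage by the remark ``for $j\notin \N_1'\cup\N_2'$ we have $\beta_j>\alpha_{i_2}$''.

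The one genuine difference is how the reduction to a single coordinate is justified when the level set $\{j:\beta_j=v\}$ may be infinite. The paper outsources this step to ``the arguments from the proof of Theorem~2 in~\cite{dJ-L}'', whereas you stay inside the paper by applying Lemma~\ref{lm:showjp} pairwise: project the ergodic joining to $X_i\times Y_{j_1}\times Y_{j_2}$, observe the image lands in $L^2_0(Y_{j_1})\oplus L^2_0(Y_{j_2})$, and conclude that at most one component survives; countability then gives a single coordinate. This is a clean, self-contained substitute for the external citation and is in the same spirit as the paper's own use of Remark~\ref{uw:inpart} in the finite case. So your route is correct and arguably tidier on this point, but not conceptually different.
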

\begin{proof}
We will combine the arguments from the proofs of Lemma~\ref{lm:19}, Lemma~\ref{lm:20} and Proposition~\ref{pr:gl}. Notice first that that as in~\eqref{eq:lm:20:1} and~\eqref{eq:lm:20:2} we have
\begin{equation*}
L^2_0(X_1\times X_2\times \dots)= \bigoplus_{k\geq 1} \bigoplus_{i_1<\dots <i_k}L_{i_1,\dots,i_k}^X
\end{equation*}
and
\begin{equation*}
L^2_0(Y_1\times Y_2\times\dots)= \bigoplus_{k \geq 1} \bigoplus_{i_1<\dots <i_k}L_{i_1,\dots,i_k}^Y,
\end{equation*}
where
\begin{equation*}
L_{i_1,\dots,i_k}^X= \bigotimes_{j=1}^{k} L_0^2(X_{i_j}) \text{ and }L_{i_1,\dots,i_k}^Y= \bigotimes_{j=1}^{k} L_0^2(Y_{i_j}),
\end{equation*}
$(\cT_1\times \cT_2 \times\dots)|_{L^X_{i_1,\dots,i_k}}$ is $1-(1-\alpha_{i_1})\cdot \ldots \cdot (1-\alpha_{i_k})$-weakly mixing and $(\cS_1\times\cS_2\times \dots )|_{L^Y_{i_1,\dots,i_k}}$ is $1-(1-\beta_{i_1})\cdot \ldots \cdot (1-\beta_{i_k})$-weakly mixing. Therefore, with each subspace $L_{i_1,\dots,i_k}^X$ and $L_{i_1,\dots,i_k}^Y$ we can associate numbers $(1-\alpha_{i_1})\cdot \ldots \cdot (1-\alpha_{i_k})$ and $(1-\beta_{i_1})\cdot \ldots \cdot (1-\beta_{i_k})$ respectively.

Let $i_1$ be such that $\alpha_{i_1}\leq \alpha_i$ for all $i\in\N$. Let 
$$\N_1:=\{i\in\N\colon \alpha_i=\alpha_{i_1}\}.$$ 
Notice that for $a_i\in (0,1)$ for $1\leq i\leq k$ we have 
\begin{equation}\label{eq:kom}
1-(1-a_1)\cdot\ldots\cdot (1-a_k)>1-(1-a_i)=a_i\text{ for }1\leq i\leq k.
\end{equation}
Therefore the only $L_0^2$-subspaces of $L^2_0(Y_1\times Y_2\times\dots)$ on which we observe $\alpha_{i_1}$-weak mixing are $L^2_0(Y_j)$ such that $\beta_j=\alpha_{i_1}$ (on the other $L^2_0$-subspaces by~\eqref{eq:kom} we observe $\alpha$-weak mixing for larger constants $\alpha$). By Remark~\ref{uw:20}, the set
\begin{equation*}
\N_1':=\{j\in\N\colon \beta_j=\alpha_{i_1}\}
\end{equation*}
is non-empty and for $\beta_j\neq\alpha_{i_1}$ implies $\beta_j>\alpha_{i_1}$ by minimality of $\alpha_{i_1}$. Hence
\begin{equation*}
\Phi\left(L^2_0(X_i)\right)\subset \bigoplus_{j\in\N_1'}L^2_0(Y_j)\text{ for }i\in\N_1.
\end{equation*}
Using e.g. the arguments from the proof of Theorem 2 in~\cite{dJ-L}, we conclude that for all $i \in \N_1$ there exists $\sigma(i)\in\N_1'$ such that $\Phi(L^2_0(X_i))\subset L^2_0(Y_{\sigma(i)})$. Since $\Phi$ is an isomorphism, $$\Phi(L^2_0(X_i))= L^2_0(Y_{\sigma(i)})\text{ for }i\in\N_1$$ and $\sigma \colon \N_1 \to \N_1'$ is bijective (we may reverse the roles of $X_i$'s and $Y_i$'s by considering $\Phi^{-1}$ instead of $\Phi$).

If $\N=\N_1$ the proof is complete. Otherwise, let $i_2\in \N$ be such that $\alpha_{i_2}$ is the smallest number in the set $\{\alpha_i\colon i\notin \N_1\}$. Let 
\begin{equation*}
\N_2:=\{i\in\N\colon \alpha_i=\alpha_{i_2}\}.
\end{equation*}
Notice that $\Phi$ as an isomorphism maps independent $\sigma$-algebras onto independent $\sigma$-algebras. Moreover, the only $L^2_0$-subspaces of $L^2_0(Y_1\times Y_2\times\dots)$ on which we observe $\alpha_{i_2}$-weak mixing and which are not in the $\Phi$-image of $L^2_0(\times_{i\in\N_1}X_i)$ are $L^2_0(Y_j)$ for $j\in \N_2'$ (on other $L^2_0$-subspaces which are not in the $\Phi$-image of $L^2_0(\times_{i\in\N_1}X_i)$ we observe by~\eqref{eq:kom} $\alpha$-weak mixing for larger constants $\alpha$).  Hence, using Remark~\ref{uw:20},
\begin{equation*}
\N_2':=\{j\in\N\colon \beta_j=\alpha_{i_2}\}
\end{equation*}
is non-empty. Notice that for $j\notin \N_1'\cup\N_2'$ we have $\beta_j>\alpha_{i_2}$ (otherwise, by Remark~\ref{uw:20}, $\alpha_{i_2}$ wouldn't be the smallest number in $\{\alpha_i\colon i\notin\N_1\}$). Therefore
\begin{equation*}
\Phi(L^2_0(X_i))\subset\bigoplus_{j\in\N_2'}L^2_0(Y_j) \text{ for }i\in\N_2.
\end{equation*}
As in the first part of this proof, we obtain a bijection $\sigma\colon \N_2\to\N_2'$ such that 
\begin{equation*}
\Phi(L^2_0(X_i))= L^2_0(Y_{\sigma(i)}) \text{ for }i\in\N_2.
\end{equation*}
We complete the proof using transfinite induction (the set $\{\alpha_i\colon i\in\N\}$ is well-ordered).

\end{proof}
The above proposition implies in particular (as in the finite case) that the centralizer of the infinite product $\cT_1\times\cT_2\times \dots$ is the product of the centralizers of $C(\cT_{1}),\dots, C(\cT_2), \dots$ up to a permutation of coordinates. More precisely, we have the following corollary.
\begin{wn}
Let $\cT_i$ for $i\geq 1$ be $\alpha_i$-weakly mixing for some $\alpha_i\in(0,1)$ such that $\{\alpha_i\colon i\in \N\}$ fulfills condition WO. Then the centralizer $C(\cT_1\times\cT_2\times \dots)$ of $\cT_1\times\cT_2\times \dots$ consists of transformations belonging to
\begin{equation*}
C(\cT_{\sigma(1)})\times C(\cT_{\sigma(2)})\times \dots,
\end{equation*}
where the permutation $\sigma\colon \N\to\N$ is such that $\sigma(i)=j$ implies $\cT_i\simeq \cT_j$.
\end{wn}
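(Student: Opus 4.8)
The plan is to read this off from the preceding proposition by specialising it to $\cS_i=\cT_i$ for every $i\geq 1$. First I would fix an arbitrary $\Phi\in C(\cT_1\times\cT_2\times\dots)$; by definition this is an invertible measure-preserving transformation of the product space commuting with the product flow, hence an isomorphism of $\cT_1\times\cT_2\times\dots$ onto itself. I would then apply the preceding proposition with $\cS_i=\cT_i$, so that $\beta_i=\alpha_i$ and the hypothesis that $\{\alpha_i\colon i\in\N\}$ fulfills condition WO is precisely the one assumed here. This yields a permutation $\sigma\colon\N\to\N$ for which $\Phi$ determines an isomorphism between $\cT_i$ and $\cT_{\sigma(i)}$; at the level of $L^2$-spaces this records the key fact that $\Phi\bigl(L^2_0(X_i)\bigr)=L^2_0(X_{\sigma(i)})$ for every $i$.

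Next I would turn this coordinatewise statement into the asserted block form of $\Phi$. Since $\Phi$ carries the $i$-th coordinate $\sigma$-algebra $\mathcal{B}_i$ onto $\mathcal{B}_{\sigma(i)}$ and maps independent $\sigma$-algebras onto independent $\sigma$-algebras, its restrictions $\phi_i:=\Phi|_{\mathcal{B}_i}$ are isomorphisms between $\cT_i$ and $\cT_{\sigma(i)}$. In particular the mere existence of each $\phi_i$ forces $\cT_i\simeq\cT_{\sigma(i)}$, that is, $\sigma(i)=j$ implies $\cT_i\simeq\cT_j$, so $\sigma$ permutes only among mutually isomorphic flows. Composing each $\phi_i$ with a fixed isomorphism between $\cT_{\sigma(i)}$ and $\cT_i$ then identifies the $i$-th component of $\Phi$ with an element of the centralizer $C(\cT_{\sigma(i)})$, giving $\Phi\in C(\cT_{\sigma(1)})\times C(\cT_{\sigma(2)})\times\dots$, exactly as in the finite Corollary~\ref{co:ce2}.

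The only step that is not purely formal, and the one I expect to be the real obstacle, is the verification that the product $\times_i\phi_i$ of the coordinate restrictions reconstitutes $\Phi$ on the whole infinite product space. For this I would argue that $\Phi$ and $\times_i\phi_i$ agree on the algebra of cylinder sets --- finite products of coordinate events --- which generates $\mathcal{B}_1\otimes\mathcal{B}_2\otimes\dots$, and then invoke the uniqueness of a measure-preserving transformation determined on a generating algebra together with the invertibility of $\Phi$. This is the exact analogue of the step underlying the finite Corollary~\ref{co:ce2}, so beyond the bookkeeping forced by the infinitely many coordinates it presents no genuine difficulty.
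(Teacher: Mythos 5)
Your proof is correct and takes essentially the same route as the paper: the paper states this corollary as a direct consequence of the immediately preceding proposition, applied exactly as you do, with $\cS_i=\cT_i$ (so $\beta_i=\alpha_i$ along the same sequence $t_n$, and condition WO holds by hypothesis), yielding the permutation $\sigma$ and the coordinatewise isomorphisms. The extra details you supply --- reconstituting $\Phi$ from its coordinate restrictions by agreement on cylinder sets, and composing with fixed isomorphisms to land in the centralizers --- are just the bookkeeping the paper leaves implicit, and they are carried out correctly.
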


\section*{Acknowledgements}
I would like to thank Professor V.V. Ryzhikov for stating the problem and Professor M. Lema{\'n}czyk for fruitful discussions on the subject. I would also like to thank the referee for comments, suggestions and questions.

\small{
\bibliography{bibryztro}}
\end{document}